\documentclass[11pt]{amsart}
\usepackage[margin=1.05in]{geometry}
\usepackage{amsmath,amssymb,amsthm,mathrsfs,mathtools}
\usepackage{enumitem}
\usepackage{booktabs}
\usepackage{tikz}
\usetikzlibrary{arrows.meta,decorations.pathreplacing,calc}
\usepackage{pgfplots}
\pgfplotsset{compat=1.18}
\usepackage[colorlinks=true,linkcolor=blue!50!black,citecolor=green!40!black,urlcolor=blue!60!black]{hyperref}

\newtheorem{theorem}{Theorem}[section]
\newtheorem{lemma}[theorem]{Lemma}
\newtheorem{proposition}[theorem]{Proposition}

\newtheorem*{mainthm}{Main Theorem}
\theoremstyle{definition}

\theoremstyle{remark}
\newtheorem{remark}[theorem]{Remark}
\numberwithin{equation}{section}
\numberwithin{figure}{section}

\newcommand{\R}{\mathbb{R}}

\newcommand{\Z}{\mathbb{Z}}
\newcommand{\I}{[0,1]}
\newcommand{\ent}{\operatorname{ent}}
\newcommand{\htop}{h}
\newcommand{\Spec}{\operatorname{Spec}}
\newcommand{\Fix}{\operatorname{Fix}}
\newcommand{\conv}{\operatorname{conv}}
\newcommand{\supp}{\operatorname{supp}}
\newcommand{\KK}{\mathscr{K}}
\newcommand{\cP}{\mathscr{P}}
\newcommand{\cT}{\mathscr{T}}
\newcommand{\cM}{\mathscr{M}}

\newcommand{\CHD}{\mathrm{CHD}}
\newcommand{\CHL}{\mathrm{CHL}}
\newcommand{\TE}{\mathrm{TE}}
\begin{document}

\title[Entropy spectra of chaotic interval maps]{The entropy spectra of chaotic maps of an interval}

\author{Vinod Kumar P B}
\address{Department of Mathematics, Muthoot Institute of Technology and Science, Kochi, Kerala, India}
\email{vinodkumarpb@mgits.ac.in}

\subjclass[2020]{Primary 37E05, 37B40; Secondary 37B05, 37D45}
\keywords{Topological entropy; interval maps; transitivity; Devaney chaos; Li--Yorke chaos; entropy spectrum; horseshoes; non-compact intervals}

\begin{abstract}
For a continuous self-map $f$ of a real interval $L$ let $\ent(f)$ be the supremum of the topological entropies of the restrictions of $f$ to compact invariant sets. For a class $\cP$ of maps let $\Spec_\cP(L)=\{\ent(f):f\in\cP\}$ be its entropy spectrum. We determine the spectra of the three standard classes of chaotic maps of the compact interval: Devaney chaotic maps have spectrum $[\frac12\log2,\infty]$, maps of positive entropy have spectrum $(0,\infty]$, and Li--Yorke chaotic maps have spectrum $[0,\infty]$; mixing maps have spectrum $(\frac12\log2,\infty]$, and Devaney chaotic maps with at least two fixed points have spectrum $[\log2,\infty]$. For an arbitrary interval, compact or not, maps with a point of least period $n$ have spectrum $[\log\sigma_n,\infty]$ and Li--Yorke chaotic maps have spectrum $[0,\infty]$. The proofs combine a truncation theorem for maps of non-compact intervals, a horseshoe argument showing that two fixed points force entropy $\log2$ on transitive maps, and explicit constructions: an exact piecewise linear family, a mixing map of infinite entropy and a square-root construction. As a consequence, a Devaney chaotic interval map attains the minimal entropy $\frac12\log2$ only if it has a unique fixed point and is not mixing.
\end{abstract}

\maketitle

%=====================================================================
\section{Introduction}\label{sec:intro}
%=====================================================================

Let $\I$ be the compact unit interval and $C(\I)$ the set of its continuous self-maps. Three notions of chaos are in common use for such maps. A map is \emph{Devaney chaotic} if it is topologically transitive and its periodic points are dense \cite{Dev89,BBCDS92}; for interval maps transitivity alone already implies density of periodic points \cite{Sha64b,AKLS99}. A map is \emph{Li--Yorke chaotic} if it has an uncountable scrambled set \cite{LY75}. Finally, positive topological entropy \cite{AKM65,Bow71} is itself often taken as the definition of chaos. The relations between these notions for interval maps are classical: Devaney chaos implies positive entropy, because transitive interval maps have entropy at least $\frac12\log2$ \cite{Blo82,BC87}; positive entropy implies Li--Yorke chaos \cite{BGKM02}; and Li--Yorke chaos does not imply positive entropy \cite{Smi86}. Thus, writing $\CHD$, $\TE$ and $\CHL$ for the classes of Devaney chaotic maps, maps of positive entropy and Li--Yorke chaotic maps,
\begin{equation}\label{eq:classes}
\CHD(\I)\subsetneq\TE(\I)\subsetneq\CHL(\I).
\end{equation}
The interplay between transitivity, dense periodicity and entropy on intervals and related spaces continues to be an active subject. In the closure of the set of interval maps with dense periodic points, positive entropy already forces entropy at least $\frac12\log2$, and infinite entropy is generic \cite{BCOT24}; on compact spaces containing a free arc, transitive non-minimal maps have dense periodic points and positive entropy, with short proofs given in \cite{KW25}; and for circle maps the relation between small entropy of transitive maps and the structure of their sets of periods is studied in \cite{ABG24}.

In this paper we ask a quantitative question: \emph{which values of the entropy occur in each of these classes?} For a class $\cP$ of maps of an interval $L$ we call
\[
\Spec_\cP(L)=\{\ent(f): f\in\cP\}\subset[0,\infty]
\]
the \emph{entropy spectrum} of $\cP$. Here $\ent(f)$ is the entropy of \cite{CR05}, defined as the supremum of the topological entropies of the restrictions of $f$ to its non-empty compact invariant sets; it coincides with the usual topological entropy when $L$ is compact, and it is the appropriate notion for maps of non-compact intervals, since it is independent of the metric (for a comparison of entropy notions for maps of an interval, including piecewise continuous ones, see \cite{CV24}). The infimum $\inf\Spec_\cP(L)$ is the least entropy that the property $\cP$ forces; the classical results quoted above say that $\inf\Spec_{\CHD}(\I)=\frac12\log2$, $\inf\Spec_{\TE}(\I)=0$ and $\inf\Spec_{\CHL}(\I)=0$. The spectrum itself is a finer invariant: it records not only the least entropy compatible with the property but all of them. Our main result determines these spectra completely.

\begin{mainthm}
Let $\sigma_n$ denote the Štefan constants (Section~\ref{sec:prelim}).
\begin{enumerate}[label=(\roman*)]
\item $\Spec_{\CHD}(\I)=[\tfrac12\log2,\infty]$, and the spectrum of mixing maps of $\I$ is $(\tfrac12\log2,\infty]$.
\item The spectrum of Devaney chaotic maps of $\I$ with at least two fixed points is $[\log2,\infty]$. Consequently a Devaney chaotic map of $\I$ of entropy less than $\log2$ has a unique fixed point, and a Devaney chaotic map of entropy exactly $\frac12\log2$ has a unique fixed point and is not mixing.
\item $\Spec_{\TE}(\I)=(0,\infty]$ and $\Spec_{\CHL}(\I)=[0,\infty]$; hence
\[
\Spec_{\CHD}(\I)\subsetneq\Spec_{\TE}(\I)\subsetneq\Spec_{\CHL}(\I).
\]
\item For every interval $L$, compact or not, and every $n\ge1$, the spectrum of maps of $L$ having a point of least period $n$ is $[\log\sigma_n,\infty]$, the spectrum of maps having points of all periods is $[\log\frac{1+\sqrt5}2,\infty]$, and $\Spec_{\CHL}(L)=[0,\infty]$.
\end{enumerate}
\end{mainthm}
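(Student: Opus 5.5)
The Main Theorem splits into lower bounds, which are largely classical, and realization results, which need constructions. We argue item by item, handling lower bounds first.

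\emph{Lower bounds.} For (i), transitive maps of $\I$ have entropy at least $\frac12\log2$ by \cite{Blo82,BC87}. For mixing maps we must exclude equality. A transitive non-mixing map has a fixed point $c$ with $f([0,c])=[c,1]$ and $f([c,1])=[0,c]$, and $f^2$ is mixing on each half. A mixing map $f$ has a unique fixed point only if it is not of this swapping type. We then show that a mixing map of entropy exactly $\frac12\log2$ would force $f^2$ to have entropy $\log2$ with a minimal horseshoe structure, and that this minimal configuration is exactly the swapping one. So the bound is strict.

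For (ii), the key step is the horseshoe lemma. Let $f$ be transitive with fixed points $a<b$. By transitivity, $f([a,b])\not\subset[a,b]$, and the images of $[a,b]$ eventually cover $\I$. We aim to find two closed intervals $J_1,J_2$ with disjoint interiors such that $f(J_i)\supset J_1\cup J_2$, giving $\htop(f)\ge\log2$. If $f([a,b])$ exceeds $b$, take $J_1=[a,b]$. For $J_2$, use an interval adjacent to a fixed endpoint whose image covers past the other fixed point, using that both endpoints of $[a,b]$ are fixed. We expect this case analysis on where $f$ maps $[a,b]$ and the outer pieces $[0,a]$ and $[b,1]$ to be the main obstacle. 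The argument must use density of periodic points and transitivity to rule out $f$ keeping $[0,a]$ invariant.

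The consequences in (ii) then follow. A map of entropy $\frac12\log2$ has a unique fixed point. By the analysis in (i) it is of swapping type around that point, hence not mixing.

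For (iii), positive entropy is trivially excluded from $0$. For (iv), Štefan's theorem gives $\htop\ge\log\sigma_n$ on compact invariant sets: a periodic orbit lies in a compact subinterval $K$, and we apply \cite{CR05}'s truncation theorem to reduce to a compact map whose entropy is at most $\ent(f)$. Points of all periods force period $3$, hence entropy at least $\log\frac{1+\sqrt5}2$.

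\emph{Realization.} For each $t$ in the stated range we construct maps.
\begin{enumerate}[label=(\alph*)]
\item An exact piecewise linear family of constant slope $\lambda$: a $\lambda$-expanding tent-like map with appropriate turning points has entropy $\log\lambda$, and is mixing for every $\lambda\in(\sqrt2,\infty)$. This gives $(\frac12\log2,\infty)$ for mixing maps, and within it $[\log2,\infty)$ using maps with two fixed points, e.g.\ full $k$-fold maps and their perturbations.
\item A mixing map with infinitely many increasingly steep laps accumulating at a point realizes $\infty$.
\item The value $\frac12\log2$ comes from the square-root construction: take a mixing $g$ and form the swapping map $f$ on $\I$ with $f^2$ conjugate to $g$ on each half. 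Then $\htop(f)=\frac12\htop(g)$, which realizes $[\frac12\log2,\infty]$ for Devaney chaotic maps. The same trick applied to the tent map gives exactly $\frac12\log2$.
\item For $\TE$, compose with small-entropy examples, e.g.\ constant-slope maps of slope $e^t$ on a subinterval, which give all $t>0$. Add $0$ for $\CHL$ by Smítal's map \cite{Smi86}.
\item For (iv), use Štefan maps realizing $\log\sigma_n$ exactly, extended to a non-compact $L$ by a homeomorphism of the interior, so that $\ent$ is unchanged. Larger values come from taking the maximum with the constructions above on a disjoint invariant piece.
\end{enumerate}

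Strictness of the inclusions in (iii) is then immediate from the computed spectra.
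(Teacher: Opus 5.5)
Your overall architecture matches the paper's: Blokh's bound, constant-slope families, the square-root construction, Štefan plus truncation, and gluing. But the one genuinely new lower bound, the two-fixed-point horseshoe behind (ii), is not proved, and the route you sketch cannot work.

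Taking $J_1=[a,b]$ between the two fixed points already fails for the full tent map $T(x)=\min\{2x,2-2x\}$. Its fixed points are $0$ and $\frac23$, and $T([0,\frac23])=\I$. But any $J_2$ with interior disjoint from $[0,\frac23]$ lies in $[\frac23,1]$, so $T(J_2)\subset T([\frac23,1])=[0,\frac23]$, which cannot contain $J_2$. The horseshoe has to be cut at a turning point, not at a fixed point.

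The paper's construction runs as follows.
\begin{enumerate}
\item Take consecutive fixed points $a<b$ and, after conjugating by $x\mapsto1-x$, assume $g>\mathrm{id}$ on $(a,b)$.
\item Since $[a,1]$ is not invariant when $a>0$, there is a least $c>b$ with $g(c)=a$, and $g>a$ on $(a,c)$.
\item Since $[a,c]$ is not invariant, the maximum of $g$ on $[a,c]$, attained at some $d$, exceeds $c$. Hence $[a,d],[d,c]$ is a $2$-horseshoe.
\item The case $a=0$ needs separate handling. If $g$ has a zero in $[b,1]$, the same construction works, using surjectivity when $c=1$. If not, the orbit of any $y\neq0$ never enters $[0,m)$ with $m=\min\{y,b,\min g([b,1])\}$, which contradicts transitivity.
\end{enumerate}
Without this step neither the lower bound $\log2$ in (ii) nor the unique-fixed-point rigidity is established.

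There are three smaller problems.

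\emph{Strictness for mixing maps.} Your sketch (``minimal horseshoe structure \dots exactly the swapping one'') is not an argument. Either cite the result as the paper does (Theorem~\ref{thm:known}(c)), or argue directly. $\Fix(g)$ is closed with empty interior, so some compact non-degenerate $J\subset(0,1)$ misses it. Mixing then gives $g^n(J)\supset J$ for all large $n$, so for a large prime $p$ there is a point of least period $p$. Hence $\htop(g)\ge\log\lambda_p>\frac12\log2$, since $\lambda_p^{p-2}(\lambda_p^2-2)=1$.

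\emph{Truncation.} The inequality $\htop(f_J)\le\ent(f)$, which you attribute to \cite{CR05}, is the paper's own Theorem~\ref{thm:truncation}. It is not automatic, because clipping creates orbits that $f$ does not have. The paper proves it by noting that positive-entropy ergodic measures of $f_J$ are non-atomic. They therefore vanish on the relatively open clipping set and all its preimages, so their supports are genuinely $f$-invariant.

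\emph{Non-compact $L$.} Transporting a map of $\I$ to $L$ through a homeomorphism of $(0,1)$ requires the map to preserve $(0,1)$. The Štefan model need not: for period $3$, an interior point of the cycle maps to an endpoint. The paper instead uses $\eta^{-1}\varphi\eta\circ r_J$, whose $\ent$ equals $\htop(\varphi)$ by the retraction formula. Likewise, you cannot simply place two invariant pieces side by side, because the connecting part could add entropy. The paper first makes the Štefan (or Smítal) map fix an endpoint by an affine attachment that adds no entropy (Lemma~\ref{lem:attach}), then glues the two pieces at the common fixed point (Lemma~\ref{lem:glue}).
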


The Main Theorem says that in every case the spectrum is an interval ending at $\infty$, whose left endpoint is the forced entropy and belongs to the spectrum exactly for Devaney chaos, two fixed points, periods and Li--Yorke chaos, but not for mixing or for positive entropy. Figure~\ref{fig:spectralines} summarises the statement.

Three ingredients enter the proof, and each has some independent interest.
\begin{itemize}[leftmargin=*]
\item A \emph{truncation theorem} (Theorem~\ref{thm:truncation}): for a map $f$ of a non-compact interval and a compact subinterval $J$, let $f_J$ be the map of $J$ obtained by composing $f$ with the nearest-point retraction onto $J$. Then $\ent(f)\ge\htop(f_J)$, and $\ent(f)$ is the supremum of all $\htop(f_J)$. Clipping creates new orbits but never creates entropy. This is what allows the Štefan lower bound to pass from compact to arbitrary intervals in part (iv).
\item A \emph{two-fixed-point theorem} (Theorem~\ref{thm:interval}): a transitive map of $\I$ with two fixed points has a $2$-horseshoe, hence entropy at least $\log2$. This is a sharp strengthening of the bound $\frac12\log2$ of Blokh and gives the rigidity statement in part (ii).
\item Explicit \emph{constructions} (Section~\ref{sec:constructions}): the piecewise linear maps $Z_\lambda(x)=\operatorname{dist}(\lambda x,2\Z)$, which we show to be exact for $\lambda\ge2$; a mixing map of $\I$ with infinite entropy fixing both endpoints; a square-root construction that halves the entropy of a transitive map; and gluing lemmas that adjust entropy without destroying periodic orbits or scrambled sets.
\end{itemize}

The paper is organised as follows. Section~\ref{sec:prelim}  recalls the results we use. Section~\ref{sec:truncation} proves the truncation theorem, Section~\ref{sec:twofixed} the two-fixed-point theorem, and Section~\ref{sec:constructions} the constructions. Section~\ref{sec:proof} assembles the proof of the Main Theorem.

\begin{figure}[t]
\centering
\begin{tikzpicture}[x=9cm,y=0.7cm]
\foreach \row/\lab in {0/{$\Spec_{\CHD}(\I)$},1/{mixing maps of $\I$},2/{$\CHD$, two fixed points},3/{$\Spec_{\TE}(\I)$},4/{$\Spec_{\CHL}(\I)$},5/{least period $3$, any $L$}}{
  \node[anchor=east,font=\small] at (-0.03,-\row) {\lab};
  \draw[gray!50] (0,-\row)--(1.05,-\row);}
\draw[very thick,blue!70!black] (0.34657,0)--(1.05,0); \fill[blue!70!black] (0.34657,0) circle (2.2pt);
\draw[very thick,blue!70!black] (0.34657,-1)--(1.05,-1); \filldraw[fill=white,draw=blue!70!black,thick] (0.34657,-1) circle (2.2pt);
\draw[very thick,blue!70!black] (0.69315,-2)--(1.05,-2); \fill[blue!70!black] (0.69315,-2) circle (2.2pt);
\draw[very thick,blue!70!black] (0,-3)--(1.05,-3); \filldraw[fill=white,draw=blue!70!black,thick] (0,-3) circle (2.2pt);
\draw[very thick,blue!70!black] (0,-4)--(1.05,-4); \fill[blue!70!black] (0,-4) circle (2.2pt);
\draw[very thick,blue!70!black] (0.48121,-5)--(1.05,-5); \fill[blue!70!black] (0.48121,-5) circle (2.2pt);
\foreach \r in {0,...,5}{\node[font=\scriptsize,blue!70!black] at (1.1,-\r) {$\infty$};}
\draw (0,-5.7)--(1.05,-5.7);
\foreach \v/\l in {0/0,0.34657/{$\frac12\log2$},0.48121/{$\log\frac{1+\sqrt5}2$},0.69315/{$\log2$},1.0/{$1$}}{\draw (\v,-5.6)--(\v,-5.8) node[below,font=\scriptsize]{\l};}
\end{tikzpicture}
\caption{The spectra of the Main Theorem. A filled dot means that the left endpoint belongs to the spectrum, an open dot that it does not.}
\label{fig:spectralines}
\end{figure}

%=====================================================================
\section{Preliminaries}\label{sec:prelim}
%=====================================================================

An \emph{interval} is a connected subset of $\R$ with non-empty interior. For a compact interval $J=[a,b]\subset L$ the \emph{clamp} $r_J\colon L\to J$ is $r_J(x)=\min\{\max\{x,a\},b\}$, and the \emph{truncation} of $f\in C(L)$ is $f_J=r_J\circ f|_J\in C(J)$.

For a compact metric space $Y$ and $g\in C(Y)$, $\htop(g)$ denotes the topological entropy \cite{AKM65,Bow71}. We use the following standard facts \cite{Wal82,ALM00}.

\begin{theorem}\label{thm:compactprops}
Let $Y,Y'$ be compact metric spaces, $g\in C(Y)$, $g'\in C(Y')$.
\begin{enumerate}[label=(\alph*)]
\item If $\varphi\colon Y\to Y'$ is continuous and $g'\varphi=\varphi g$, then $\htop(g)\le\htop(g')$ if $\varphi$ is injective, and $\htop(g)\ge\htop(g')$ if $\varphi$ is surjective.
\item If $Y=Y_1\cup\dots\cup Y_k$ with $Y_i$ closed and $g$-invariant, then $\htop(g)=\max_i\htop(g|_{Y_i})$.
\item $\htop(g^n)=n\htop(g)$ for $n\ge1$.
\item If $\Omega(g)$ is the non-wandering set, then $\htop(g)=\htop(g|_{\Omega(g)})$.
\item (Variational principle.) $\htop(g)=\sup\{h_\mu(g):\mu\in E(Y,g)\}$, where $E(Y,g)$ is the set of ergodic $g$-invariant Borel probability measures and $h_\mu$ is the measure-theoretic entropy.
\end{enumerate}
\end{theorem}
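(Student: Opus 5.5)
These five standard facts are not proved from scratch; each is reduced to the Bowen definition of $\htop$ via $(n,\varepsilon)$-separated and spanning sets, with the textbook sources \cite{Wal82,ALM00} cited for the classical arguments. For (a), fix $\varepsilon>0$. If $\varphi$ is injective, then $\varphi$ is a homeomorphism onto its image $\varphi(Y)$, which is a closed $g'$-invariant subset of $Y'$. Uniform continuity of $\varphi^{-1}$ on $\varphi(Y)$ gives a $\delta>0$ such that every $(n,\varepsilon)$-separated set for $g$ maps to an $(n,\delta)$-separated set for $g'$. Hence $\htop(g)=\htop(g'|_{\varphi(Y)})\le\htop(g')$. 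If $\varphi$ is surjective, uniform continuity of $\varphi$ goes the other way: there is $\delta$ with $d(y,z)<\delta\Rightarrow d'(\varphi y,\varphi z)<\varepsilon$. So the image of an $(n,\delta)$-spanning set for $g$ is an $(n,\varepsilon)$-spanning set for $g'$, which gives $\htop(g')\le\htop(g)$.

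For (b), each inclusion $Y_i\hookrightarrow Y$ is an injective semiconjugacy, so (a) gives $\max_i\htop(g|_{Y_i})\le\htop(g)$. For the reverse inequality, every $(n,\varepsilon)$-spanning set for $g$ can be built as a union of spanning sets for the pieces $g|_{Y_i}$, because each $Y_i$ is invariant and therefore orbit segments starting in $Y_i$ stay in $Y_i$. The growth rate of a sum of $k$ terms is the maximum of the individual growth rates, which gives the equality.

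For (c), use the metric $d_n(y,z)=\max_{0\le j<n}d(g^jy,g^jz)$. A set that is $(m,\varepsilon)$-separated for $g^n$ is $(mn,\varepsilon)$-separated for $g$, which gives one inequality. Conversely, uniform continuity of $g,\dots,g^{n-1}$ produces a $\delta$ such that $(m,\delta)$-spanning sets for $g^n$ are $(mn,\varepsilon)$-spanning for $g$, which gives the other.

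For (e), I would simply cite Walters \cite{Wal82}, Theorem 8.6, together with the ergodic decomposition, which lets one pass from all invariant measures to ergodic ones because entropy is affine. Given (e), part (d) follows at once. Every invariant measure is supported on the non-wandering set $\Omega(g)$, since by Poincaré recurrence $\mu$-almost every point is recurrent. The ergodic measures of $g$ and of $g|_{\Omega(g)}$ are therefore the same, and the two sides of (d) agree by (e).

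The only step that is not routine bookkeeping is (e), the variational principle itself; (d) then reduces to it. That proof, Misiurewicz's argument, is long, so I would cite it rather than reproduce it.
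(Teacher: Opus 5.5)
Your proposal is correct and consistent with the paper. The paper gives no proof of these facts and simply quotes them as standard from \cite{Wal82,ALM00}. Your sketches are the usual textbook arguments: separated and spanning sets for (a)--(c), the variational principle cited for (e), and (d) deduced from (e) because Poincar\'e recurrence puts every invariant measure on $\Omega(g)$. One small point to tighten: passing from all invariant measures to ergodic ones in (e) needs the integral form of affinity, $h_\mu(g)=\int h_{\mu_x}(g)\,d\tau(x)$ over the ergodic decomposition (Jacobs' theorem), not merely affinity under finite convex combinations.
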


A point $y\in Y$ is an \emph{atom} of a Borel probability measure $\mu$ on $Y$ if $\mu(\{y\})>0$. The \emph{support} $\supp\mu$ of $\mu$ is the set of points all of whose neighbourhoods have positive measure; it is closed, $\mu(\supp\mu)=1$, and a relatively open set of measure zero is disjoint from it. If $\mu$ is $g$-invariant, then $g(\supp\mu)\subset\supp\mu$, because $g^{-1}(\supp\mu)$ is a closed set of full measure and therefore contains $\supp\mu$. The \emph{non-wandering set} $\Omega(g)$ is the set of points $y$ such that every neighbourhood $U$ of $y$ satisfies $g^n(U)\cap U\ne\emptyset$ for some $n\ge1$; it is closed and $g$-invariant.

For a topological space $X$ and $f\in C(X)$ let $\KK(f)$ be the family of non-empty compact sets $K\subset X$ with $f(K)\subset K$, and \cite{CR05}
\[
\ent(f)=\sup\{\htop(f|_K):K\in\KK(f)\}\qquad(\sup\emptyset=0).
\]
If $X$ is compact then $\ent(f)=\htop(f)$. If $\varphi\colon X\to X'$ is a homeomorphism, then $\ent(\varphi f\varphi^{-1})=\ent(f)$, and $\ent(f^n)=n\ent(f)$ \cite{CR05}. If $r\colon X\to A$ is a retraction and $\varphi\in C(A)$, then $\ent(\varphi\circ r)=\ent(\varphi)$: every compact $\varphi$-invariant set is $\varphi\circ r$-invariant, and conversely for $K\in\KK(\varphi\circ r)$ the set $K_\infty=\bigcap_n(\varphi r)^n(K)\subset A$ is compact, invariant, carries the full entropy of $(\varphi r)|_K$ by Theorem~\ref{thm:compactprops}(d) (the non-wandering set of $(\varphi r)|_K$ lies in $K_\infty$), and $(\varphi r)|_{K_\infty}=\varphi|_{K_\infty}$.

A map $f\in C(X)$ is \emph{transitive} if for all non-empty open $U,V$ there is $n\ge1$ with $f^n(U)\cap V\neq\emptyset$, \emph{mixing} if such $n$ exist for all large $n$, and \emph{Devaney chaotic} if it is transitive with dense periodic points. On an interval a transitive map has dense periodic points \cite{Sha64b,AKLS99}, so $\CHD(L)$ is the class of transitive maps of $L$. Let $(X,d)$ be a metric space; a set $S\subset X$ is \emph{scrambled} for $f$ if $\liminf_n d(f^nx,f^ny)=0<\limsup_n d(f^nx,f^ny)$ for all distinct $x,y\in S$, and $f$ is \emph{Li--Yorke chaotic} if it has an uncountable scrambled set. When $S$ lies in a compact invariant set this does not depend on the compatible metric.

An \emph{$s$-horseshoe} for $\varphi\colon L\to\R$ is a family of $s\ge2$ non-degenerate compact intervals $J_1,\dots,J_s\subset L$ with pairwise disjoint interiors such that $J_1\cup\dots\cup J_s\subset\varphi(J_i)$ for all $i$. A map of a compact interval with an $s$-horseshoe has entropy at least $\log s$ \cite{BC92,ALM00}. A piecewise linear map of a compact interval with constant slope $\pm\lambda$, $\lambda\ge1$, has entropy $\log\lambda$ \cite{MS80}.

For $k\ge3$ odd let $\lambda_k$ be the largest root of $x^k-2x^{k-2}-1$; for $n=2^sk$ with $k$ odd put $\sigma_n=1$ if $k=1$ and $\sigma_n=\lambda_k^{2^{-s}}$ otherwise, so that $\sigma_3=\frac{1+\sqrt5}2$ and $\sigma_n>1$ unless $n$ is a power of $2$. We shall use the following results.

\begin{theorem}\label{thm:known}
\begin{enumerate}[label=(\alph*)]
\item \cite{Ste77,BGMY80} If a map $g$ of a compact interval has a point of least period $n$, then $\htop(g)\ge\log\sigma_n$, and there is $g_n\in C(\I)$ with a point of least period $n$ and $\htop(g_n)=\log\sigma_n$.
\item \cite{Sha64} A map of a compact interval with a point of least period $3$ has points of all periods.
\item \cite{Blo82,BC87,Rue17} A transitive map of a compact interval has entropy at least $\frac12\log2$, and a mixing one has entropy greater than $\frac12\log2$.
\item \cite{BC92,Rue17} If $f$ is a transitive map of a compact interval, then either $f$ is mixing, or there is a fixed point $c$ with $f([0,c])=[c,1]$ and $f([c,1])=[0,c]$.
\item \cite{BC92,Rue17} For $s\in(\sqrt2,2]$ let $T_s(x)=s\min\{x,1-x\}$. The core $K_s=[T_s^2(\frac12),T_s(\frac12)]$ is invariant and $T_s|_{K_s}$ is mixing.
\item \cite{BGKM02} A map of a compact metric space with positive entropy has a scrambled set homeomorphic to the Cantor set.
\item \cite{Smi86} There is a Li--Yorke chaotic map of $\I$ with topological entropy $0$.
\end{enumerate}
\end{theorem}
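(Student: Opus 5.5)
Theorem~\ref{thm:known} collects known results, so the plan is to take each item from the cited literature, checking only that the formulations there match ours. The one point needing care throughout is that several sources work on $\I$, while we state items for an arbitrary compact interval. Any compact interval is affinely homeomorphic to $\I$, and by Theorem~\ref{thm:compactprops}(a) applied in both directions entropy is a conjugacy invariant. Transitivity, mixing, least periods and scrambled sets are likewise preserved by an affine conjugacy (for scrambled sets on a compact space the metric is irrelevant). So every item reduces to the case $\I$.

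For (a) I would quote Štefan's description of the minimal orbit of period $n=2^sk$ and the resulting Markov graph computation of \cite{BGMY80}. The Štefan cycle with $k$ odd has transition matrix with characteristic polynomial divisible by $x^k-2x^{k-2}-1$, which gives $\lambda_k$; passing to $f^{2^s}$ and using Theorem~\ref{thm:compactprops}(c) yields the root $\lambda_k^{2^{-s}}$. Sharpness comes from the piecewise linear ``connect-the-dots'' map of a Štefan orbit. When $k=1$ the bound is $\log1=0$, attained for instance by the identity if $n=1$, or by any zero-entropy map with a cycle of period $2^s$. Item (b) is Šarkovs'kyĭ's theorem restricted to period $3$.

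For (c) and (d) I would cite Blokh and Block--Coven. The dichotomy (d) is obtained by showing that if $f$ is transitive but not mixing, then $f^2$ has two invariant transitive pieces meeting at a fixed point $c$. The inequality (c) then follows. In the non-mixing case, $f^2|_{[0,c]}$ is transitive, and one shows it has a horseshoe, so $\htop(f^2)\ge\log2$. In the mixing case, a more delicate argument (Ruette's book \cite{Rue17}) gives strict inequality. For (e) I would compute directly: for $s\in(\sqrt2,2]$ the tent core is invariant, and since $s^2>2$ the images of every subinterval under $T_s$ grow until they cover the turning point twice, which yields exactness and hence mixing on $K_s$.

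Item (f) is the theorem of Blanchard--Glasner--Kolyada--Maass \cite{BGKM02}. Item (g) is Smítal's construction \cite{Smi86} of a zero-entropy map of type $2^\infty$ with an uncountable scrambled set. I expect no real obstacle beyond bookkeeping, because every step is a citation. The only step worth double-checking is the normalisation of $\sigma_n$ for even $n$ in (a): the period of $f^{2^s}$ and the factor $2^{-s}$ in the entropy must match.
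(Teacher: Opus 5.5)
Your plan matches the paper, which gives no argument for Theorem~\ref{thm:known} beyond the cited sources, and reducing each item to $\I$ by affine conjugacy is the right bookkeeping. Two small slips in your sketch of (a): the characteristic polynomial of the $(k-1)\times(k-1)$ Markov matrix of a \v{S}tefan cycle divides $x^k-2x^{k-2}-1$ (the quotient is $x+1$), not the other way round; and for $n=2^sk$ with $s\ge1$, $k\ge3$, the extremal example is the connect-the-dots map of the $s$-fold period-doubled \v{S}tefan cycle, because the connect-the-dots map of the period-$k$ \v{S}tefan cycle itself has entropy $\log\lambda_k>\log\sigma_n$.
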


%=====================================================================
\section{The truncation theorem}\label{sec:truncation}
%=====================================================================

The entropy $\ent(f)$ of a map of a non-compact interval is defined through compact invariant sets, which may be hard to locate. The following theorem shows that it can be computed from the truncations $f_J$, which are maps of compact intervals, even though truncation creates orbits (those reaching the endpoints of $J$) that $f$ does not have.

\begin{lemma}\label{lem:atom}
Let $Y$ be a compact metric space, $g\in C(Y)$ and $\mu\in E(Y,g)$. If $\mu$ has an atom, then $\mu$ is supported on a periodic orbit and $h_\mu(g)=0$.
\end{lemma}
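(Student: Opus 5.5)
Let $y$ be an atom of $\mu$, so $\mu(\{y\})=\delta>0$. The plan is to show first that $y$ is periodic, then that $\mu$ is the uniform measure on its orbit, and finally that the entropy vanishes.

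First, $y$ is periodic. The points $g^n(y)$, $n\ge0$, have measure at least $\delta$ each. To see this, note that $g^{-n}(\{g^n y\})\supset\{y\}$, and invariance gives $\mu(\{g^ny\})=\mu(g^{-n}\{g^ny\})\ge\delta$. If the points $g^n(y)$ were pairwise distinct, their union would have measure $\sum_n\mu(\{g^ny\})=\infty$, which is absurd. Hence $g^m(y)=g^n(y)$ for some $m<n$, so $z=g^m(y)$ is a periodic point. Its orbit $O=\{z,gz,\dots,g^{p-1}z\}$ is finite, and each point of $O$ has measure at least $\delta$.

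Next, $\mu$ is concentrated on $O$. The set $O$ is Borel and satisfies $g^{-1}(O)\supset O$. Since $\mu(g^{-1}O)=\mu(O)$, the set $g^{-1}(O)\setminus O$ is null, so $O$ is invariant modulo null sets. Ergodicity then gives $\mu(O)\in\{0,1\}$, and since $\mu(O)\ge\delta>0$ we get $\mu(O)=1$. On $O$, $g$ acts as a cyclic permutation. Invariance gives $\mu(\{g^{i+1}z\})=\mu(g^{-1}\{g^{i+1}z\})\ge\mu(\{g^iz\})$, and going around the cycle forces all these masses to be equal. So $\mu$ is the uniform measure $\frac1p\sum_{i<p}\delta_{g^iz}$, and $\supp\mu=O$.

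Finally, $h_\mu(g)=0$ because $\mu$ is supported on a finite set. Any finite partition $\xi$ has its refinements $\bigvee_{i<n}g^{-i}\xi$ taking at most $p$ distinct nonnull values. Therefore $H_\mu(\bigvee_{i<n}g^{-i}\xi)\le\log p$, and dividing by $n$ gives zero in the limit. Alternatively, apply Theorem~\ref{thm:compactprops}(e) to $g|_O$, whose topological entropy is $0$ since $O$ is finite, and note that $h_\mu(g)=h_\mu(g|_O)$. There is no real obstacle. The only care needed is the invariance-mod-null argument in the ergodicity step, which I would handle directly as above.
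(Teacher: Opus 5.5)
Your proof is correct and follows essentially the same route as the paper: find a periodic orbit of positive mass, use ergodicity to show it carries all of $\mu$, and get $h_\mu(g)=0$ from finiteness (the paper does this via the variational principle on the finite system, as in your alternative). The differences are minor. The paper finds the periodic orbit inside the finite forward-invariant set of atoms of maximal mass, rather than by pigeonhole on the orbit of a single atom. It also carries out your mod-null ergodicity step explicitly: it passes to the strictly invariant set $\bigcup_{n\ge0}g^{-n}(P)$, whose measure equals $\mu(P)$ by continuity of $\mu$ along this increasing union.
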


\begin{proof}
Let $a=\max_y\mu(\{y\})>0$; the maximum exists because only finitely many points have mass $\ge a/2$. The set $A=\{y:\mu(\{y\})=a\}$ is finite and non-empty, and $\mu(\{gy\})=\mu(g^{-1}\{gy\})\ge\mu(\{y\})=a$ for $y\in A$, so $g(A)\subset A$ and $A$ contains a periodic orbit $P$. Since $g(P)=P$, the sets $g^{-n}(P)$ increase with $n$, their union $P'$ satisfies $g^{-1}(P')=P'$, and ergodicity gives $\mu(P')=1$; as $\mu(g^{-n}P)=\mu(P)$ for all $n$, $\mu(P)=1$. Thus $\mu$ is an invariant measure of the finite system $(P,g)$, whose topological entropy is $0$, and $h_\mu(g)=0$ by the variational principle.
\end{proof}

\begin{theorem}\label{thm:truncation}
\begin{enumerate}[label=(\alph*)]
\item Let $X$ be a Hausdorff space, $Y\subset X$ a compact metrizable subset and $r\colon X\to Y$ a retraction such that $B=r(X\setminus Y)$ is finite. For $f\in C(X)$ put $g=r\circ f|_Y\in C(Y)$. Then $\ent(f)\ge\htop(g)$. More precisely, for every $\mu\in E(Y,g)$ with $h_\mu(g)>0$ the set $S=\supp\mu$ belongs to $\KK(f)$, $f|_S=g|_S$, and $\htop(f|_S)\ge h_\mu(g)$.
\item For every interval $L$ and every $f\in C(L)$,
\[
\ent(f)=\sup\{\htop(f_J): J\subset L\ \text{a non-degenerate compact interval}\}.
\]
\item If $L$ is an interval and $\varphi\in C(L)$ has an $s$-horseshoe, then $\ent(\varphi)\ge\log s$.
\end{enumerate}
\end{theorem}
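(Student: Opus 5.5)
For part (a) I will work through the variational principle, Theorem~\ref{thm:compactprops}(e). If $\htop(g)=0$ there is nothing to prove. Otherwise it suffices to show the stronger claim for every ergodic $\mu\in E(Y,g)$ with $h_\mu(g)>0$, since taking the supremum then gives $\ent(f)\ge\htop(g)$.

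Fix such a $\mu$. By Lemma~\ref{lem:atom}, $\mu$ has no atoms, so the finite set $B$ has $\mu(B)=0$. The key observation is that $g$ and $f$ agree off $f^{-1}(X\setminus Y)$. If $y\in Y$ and $f(y)\notin Y$, then $g(y)=r(f(y))\in B$. Hence
\[
\{y\in Y:f(y)\notin Y\}\subset g^{-1}(B),
\]
and invariance gives $\mu(g^{-1}B)=\mu(B)=0$. Now put $S=\supp\mu$, which is compact and satisfies $g(S)\subset S$. I want to show $f(S)\subset Y$. The set $U=\{y\in Y: f(y)\in X\setminus Y\}$ is not obviously open, since $Y$ need not be open in $X$. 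This is the step I expect to be delicate. My first attempt is to use the set $W=Y\setminus f^{-1}(Y)$ differently. On $S\setminus g^{-1}(B)$ we have $f=g$, so $f$ maps this set into $S$. Moreover $S\setminus g^{-1}(B)$ is dense in $S$: $g^{-1}(B)$ is closed and $\mu$-null, so it contains no nonempty relatively open subset of $S$. By continuity of $f$ and closedness of $S$ (compact in Hausdorff $X$), $f(S)\subset\overline{f(S\setminus g^{-1}B)}\subset S$. So $S\in\KK(f)$ and $f(S)\subset Y$, and since $r$ is the identity on $Y$, $f|_S=g|_S$. Finally $\htop(f|_S)=\htop(g|_S)\ge h_\mu(g|_S)=h_\mu(g)$ by the variational principle applied on $S$, using $\mu(S)=1$.

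For part (b), the inequality $\ge$ follows from (a) with $X=L$, $Y=J$, $r=r_J$, where $B$ consists of the endpoints of $J$ and $g=f_J$. For $\le$, take $K\in\KK(f)$ and let $J=\conv K$ (enlarged slightly if $K$ is a point, where the entropy is $0$ anyway). Then $K$ is $f_J$-invariant and $f_J|_K=f|_K$, so $\htop(f|_K)\le\htop(f_J)$, because the inclusion $K\hookrightarrow J$ is an injective semiconjugacy (Theorem~\ref{thm:compactprops}(a)).

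For part (c), let $J=\conv(J_1\cup\dots\cup J_s)$, which is a compact subinterval of $L$. Each $J_i$ is contained in $J$, and $\varphi(J_i)\supset J_1\cup\dots\cup J_s$. Since $r_J$ fixes the $J_k$, we also get $r_J\varphi(J_i)\supset J_1\cup\dots\cup J_s$, so $\varphi_J$ has an $s$-horseshoe. If $\varphi$ maps into $L$, this gives $\htop(\varphi_J)\ge\log s$, and (b) yields $\ent(\varphi)\ge\log s$. The main obstacle overall is the density argument in (a), which ensures that clamped orbits carry no positive-entropy ergodic measure.
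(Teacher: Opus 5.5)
Your proposal is correct. Parts (b) and (c) follow the paper's argument essentially verbatim.

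In (a) you reach $f(S)\subset S$ by a slightly different mechanism.
\begin{itemize}
\item The paper works with the clipping set $E_Y=Y\cap f^{-1}(X\setminus Y)$. It notes that $E_Y$ is relatively open in $Y$ and $\mu$-null (since $E_Y\subset g^{-1}(B)$), so it is disjoint from $S=\supp\mu$. This gives $f=g$ pointwise on $S$ directly.
\item You use that $g^{-1}(B)$ is closed and $\mu$-null, so its complement is dense in $S$. On that dense part $f=g$ maps into $S$, and you pass to the closure using continuity of $f$ and closedness of $S$ in $X$.
\end{itemize}
Both arguments are valid.

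Your hesitation about the direct route is unfounded, though. Relative openness of $E_Y$ needs only $X\setminus Y$ to be open, not $Y$. That holds because $Y$ is compact in a Hausdorff space, hence closed, which is the same fact you invoke for $S$.

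The paper's version yields the pointwise statement with no limiting step. Yours needs only that the bad set $g^{-1}(B)$ be closed and null, which is marginally more robust. In this setting the two are interchangeable.
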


\begin{proof}
We begin with (a). Note first that $Y$, being a compact subset of a Hausdorff space, is closed in $X$, and that $g=r\circ f|_Y$ is a continuous self-map of the compact metrizable space $Y$; in particular the variational principle (the variational principle) applies to $g$. If $\htop(g)=0$ there is nothing to prove. Otherwise let $\mu\in E(Y,g)$ with $h_\mu(g)>0$; by the variational principle such measures exist, and the supremum of their entropies is $\htop(g)$ (also when $\htop(g)=\infty$).

The measure $\mu$ has no atoms, for otherwise Lemma~\ref{lem:atom} would give $h_\mu(g)=0$; since $B$ is finite, $\mu(B)=0$. Consider the set
\[
E_Y=\{y\in Y: f(y)\notin Y\}=Y\cap f^{-1}(X\setminus Y)
\]
of points at which the retraction actually modifies $f$. Since $X\setminus Y$ is open and $f$ is continuous, $E_Y$ is relatively open in $Y$. For $y\in E_Y$ the point $g(y)=r(f(y))$ lies in $r(X\setminus Y)=B$, so $E_Y\subset g^{-1}(B)$ and, by invariance of $\mu$,
\[
\mu(E_Y)\le\mu\bigl(g^{-1}(B)\bigr)=\mu(B)=0.
\]
For every $n\ge0$ the set $g^{-n}(E_Y)$ is relatively open in $Y$, because $g$ is continuous, and $\mu(g^{-n}(E_Y))=\mu(E_Y)=0$, again by invariance. Recall that $S=\supp\mu$ is the set of points all of whose neighbourhoods have positive measure. If a point $y\in S$ belonged to the relatively open set $g^{-n}(E_Y)$, this set would be a neighbourhood of $y$ in $Y$ of measure zero, which is impossible. Hence
\[
S\cap g^{-n}(E_Y)=\emptyset\qquad\text{for all }n\ge0 .
\]
Taking $n=0$ we see that no point of $S$ lies in $E_Y$; that is, $f(y)\in Y$ for every $y\in S$, and therefore $g(y)=r(f(y))=f(y)$, because $r$ is the identity on $Y$. The support of an invariant measure is forward invariant, so $g(S)\subset S$; consequently $f(S)=g(S)\subset S$. Thus $S$ is a non-empty compact $f$-invariant set, and $f|_S=g|_S$. The measure $\mu$ is concentrated on $S$ and is an ergodic invariant measure of the compact system $(S,g|_S)=(S,f|_S)$, so by the variational principle applied to $f|_S$,
\[
\ent(f)\ge\htop(f|_S)=\htop(g|_S)\ge h_\mu(g).
\]
Taking the supremum over all $\mu\in E(Y,g)$ with positive entropy, and using the variational principle for $g$, we obtain $\ent(f)\ge\htop(g)$.

To prove (b), let $J=[a,b]\subset L$ be a non-degenerate compact interval. The clamp $r_J$ is continuous, it is the identity on $J$, and it maps $L\setminus J$ into the two-point set $\{a,b\}$. Hence $r_J$ is a retraction of $L$ onto the compact metrizable set $J$ with finite residual set, and (a) gives $\ent(f)\ge\htop(f_J)$. Conversely, let $K\in\KK(f)$. Choose a non-degenerate compact interval $J\subset L$ containing $K$; we may take $J=\conv K$ if $K$ has more than one point, and a small compact interval in $L$ containing $K$ otherwise. Since $f(K)\subset K\subset J$, we have $r_J(f(x))=f(x)$ for $x\in K$, so the truncation $f_J$ coincides with $f$ on $K$, and $K$ is $f_J$-invariant. By Theorem~\ref{thm:compactprops}(a), applied to the inclusion of $K$ into $J$, which is injective and conjugates $f_J|_K$ to $f_J$, we get $\htop(f|_K)=\htop(f_J|_K)\le\htop(f_J)$. Taking the supremum over $K\in\KK(f)$ gives the reverse inequality.

For (c), let $J_1,\dots,J_s\subset L$ be an $s$-horseshoe for $\varphi$ and $J=\conv(J_1\cup\dots\cup J_s)\subset L$. Since $r_J$ is the identity on $J$,
\[
\varphi_J(J_i)=r_J(\varphi(J_i))\supset r_J(J_1\cup\dots\cup J_s)=J_1\cup\dots\cup J_s\qquad(1\le i\le s),
\]
so $J_1,\dots,J_s$ is an $s$-horseshoe for the compact interval map $\varphi_J$, whose entropy is therefore at least $\log s$ by the horseshoe bound of Section~\ref{sec:prelim}. By (b), $\ent(\varphi)\ge\htop(\varphi_J)\ge\log s$.
\end{proof}

\begin{figure}[t]
\centering
\begin{tikzpicture}
\begin{axis}[width=8.2cm,height=8.2cm,xmin=-2.35,xmax=2.35,ymin=-3.2,ymax=3.2,axis lines=box,axis on top,
  xtick={-2,-1.5,0,1.5,2},xticklabels={$-2$,$a$,$0$,$b$,$2$},ytick={-1.5,0,1.5},yticklabels={$a$,$0$,$b$},
  every axis plot/.append style={line join=round}]
\fill[gray!12] (axis cs:-1.5,-1.5) rectangle (axis cs:1.5,1.5);
\draw[gray] (axis cs:-1.5,-1.5) rectangle (axis cs:1.5,1.5);
\addplot[domain=-2.3:2.3,samples=300,gray,dashed]{x^3-3*x};
\addplot[domain=-1.5:1.5,samples=400,very thick,blue!70!black]{max(-1.5,min(1.5,x^3-3*x))};
\addplot[domain=-2.3:2.3,samples=2,black!50,dotted]{x};
\draw[red!70!black,line width=2.2pt] (axis cs:-1.3844,-1.5)--(axis cs:-0.5579,-1.5) (axis cs:0.5579,-1.5)--(axis cs:1.3844,-1.5);
\node[font=\small,blue!70!black] at (axis cs:-0.95,2.0) {$f_J$};
\node[font=\small,gray] at (axis cs:1.75,2.75) {$f$};
\node[font=\scriptsize,red!70!black] at (axis cs:0,-1.9) {clipping set $E\subset J$};
\end{axis}
\end{tikzpicture}
\caption{The cubic $f(x)=x^3-3x$ on $\R$ and its truncation $f_J$ to $J=[-1.5,1.5]$. On the clipping set $E$ (where $f$ leaves $J$; flat pieces of $f_J$) the truncation creates new orbits that end on $\{a,b\}$. Theorem~\ref{thm:truncation} says these orbits carry no entropy. By Corollary~\ref{thm:truncation}, $\htop(f_J)\le\ent(f)=\log 3$ for every $J$, with equality as soon as $J\supset[-2,2]$.}
\label{fig:truncation}
\end{figure}

%=====================================================================
\section{Two fixed points force entropy $\log 2$}\label{sec:twofixed}
%=====================================================================

\begin{theorem}\label{thm:interval}
Let $g\in C(\I)$ be transitive.
\begin{enumerate}[label=(\alph*)]
\item If $g$ has at least two fixed points, then $g$ has a $2$-horseshoe and $\htop(g)\ge\log2$.
\item If $g$ fixes an endpoint, then $\htop(g)\ge\log2$.
\item If $\htop(g)<\log2$, then $g$ has exactly one fixed point. If $\htop(g)=\frac12\log2$, then $g$ has exactly one fixed point and $g$ is not mixing.
\end{enumerate}
\end{theorem}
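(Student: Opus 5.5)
Part (a) is the core; (b) and (c) follow from it and Theorem~\ref{thm:known}. The plan for (a) is as follows. Let $p<q$ be two fixed points of the transitive map $g$.

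\emph{Step 1: fixed points do not attract an endpoint.} Every non-degenerate interval has dense forward orbit under $g$, so $g$ is onto. No fixed point $p$ can have a one-sided neighbourhood $[p,p+\varepsilon]$ with $g([p,p+\varepsilon])\subset[p,1]$ \emph{and} $g$ staying on the same side in a way that traps the orbit. More precisely, I would use the standard fact that for transitive interval maps, any subinterval $I$ has $\overline{\bigcup_n g^n(I)}=\I$, and that $g^n(I)$ eventually contains any compact subinterval of $(0,1)$. This follows from transitivity plus density of periodic points, or from Theorem~\ref{thm:known}(d) applied to $g$ or $g^2$.

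\emph{Step 2: build the horseshoe with $J_1=[p,q]$.} Since $[p,q]$ is not invariant (otherwise transitivity fails unless $[p,q]=\I$), and its image is an interval containing $p,q$, we have $g([p,q])\supsetneq[p,q]$. Similarly $g([0,p])\ni p$ and $g([q,1])\ni q$. The aim is to find a second interval $J_2$, adjacent to $[p,q]$ with disjoint interior, such that $g(J_1)\cap g(J_2)\supset J_1\cup J_2$. I would use $J_2=[q,c]$ or $[c,p]$ for suitable $c$.

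Since $g([p,q])\supsetneq[p,q]$, say $g([p,q])$ contains a point $y>q$ (the case $y<p$ is symmetric). Then set $c$ to be a point of $[q,1]$ with $g(c)\le p$, if such a point exists. In that case $g([q,c])\supset[p,q]\ni$, and $g([q,c])\ni q$, so we want also $g([q,c])\supset[q,c']$ and $g([p,q])\supset[q,c']$. Choosing $J_2=[q,\min(c,y')]$ appropriately, where $y'$ is the maximum of $g$ on $[p,q]$, should give $J_1\cup J_2\subset g(J_i)$ after shrinking.

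If no point of $[q,1]$ maps to the left of $p$, then $g([q,1])\subset(p,1]$. Combined with a similar analysis on the left, I would show that $[p,1]$ or some union is invariant, contradicting transitivity unless $p=0$. That endpoint case must then be handled directly: with $g(0)=0$, transitivity forces points near $0$ to be mapped far right, and $[0,q]$, $[q,1]$ become the horseshoe after verifying both images cover $\I$.

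\emph{Main obstacle.} The delicate part is the case analysis in Step 2, namely guaranteeing \emph{both} covering relations simultaneously rather than just a $2$-cycle of coverings (which would only give $\frac12\log2$). Here I would first try to use Theorem~\ref{thm:known}(d). In the non-mixing case, $g$ swaps $[0,c]$ and $[c,1]$, so $c$ is the unique fixed point; two fixed points therefore force $g$ to be mixing. For mixing $g$, $g^n([p,q])=\I$ for large $n$, but I need $n=1$ with a well-chosen partition, which is where the Step 2 covering argument with $p,q$ fixed is genuinely needed. The horseshoe bound of Section~\ref{sec:prelim} then gives $\htop(g)\ge\log2$.

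For (b): if $g$ fixes an endpoint, say $g(0)=0$, then $g$ cannot be in the swapping case of Theorem~\ref{thm:known}(d), since there the fixed point $c$ is interior and $g(0)\in[c,1]$. Hence $g$ is mixing and surjective. Some point $x>0$ then satisfies $g(x)=1$, and some point maps to $0$ beyond it. This yields either a second fixed point, reducing to (a), or directly the horseshoe $[0,x]$, $[x,1]$.

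For (c): the first claim is the contrapositive of (a), since every map of $\I$ has a fixed point. If $\htop(g)=\frac12\log2<\log2$, then $g$ has a unique fixed point, and $g$ is not mixing by Theorem~\ref{thm:known}(c).
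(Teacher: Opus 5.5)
The gap is in (a), which carries the whole theorem. Step~2 commits you to a horseshoe with $J_1=[p,q]$, where $p<q$ are fixed, and $J_2$ adjacent to it. In other words, the horseshoe is split at a fixed point. Your claim that a suitable $J_2$ appears ``after shrinking'' is false in general.

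Here is a counterexample. Let $g$ be the piecewise linear map with nodes $(0,1)$, $(0.2,0.2)$, $(0.3,0.65)$, $(0.56,0)$, $(1,1)$. All slopes have modulus greater than $2$, and the laps satisfy $g([0.2,0.3])\supset[0.3,0.56]$, $g([0.3,0.56])\supset[0,0.2]$, $g([0,0.2])\supset[0.56,1]$ and $g([0.56,1])=[0,1]$. The length-growing argument used for $Z_\lambda$ then shows that $g$ is exact, hence transitive. Its fixed points are $0.2$, $0.4$ and $1$. No pair admits your shape:
\begin{itemize}
\item For $(0.2,0.4)$: $g([0.2,0.4])=[0.2,0.65]$ rules out a left neighbour, and $\max g([0.4,t])=0.4<t$ for every $t\in(0.4,0.65]$ rules out a right one.
\item For $(0.4,1)$: a left neighbour $[t,0.4]$ needs $1$ in its image, which forces $t=0$, but $g([0,0.4])=[0.2,1]$ misses $0$.
\item For $(0.2,1)$: $g([t,0.2])=[0.2,1-4t]$ misses $t$.
\end{itemize}
Yet $[0.2,0.3],[0.3,0.48]$ is a $2$-horseshoe.

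The same map reflected by $x\mapsto1-x$ (fixed points $0,0.6,0.8$) refutes your endpoint claim that $[0,q]$ and $[q,1]$ both cover $[0,1]$. Moreover, in the branch where you invoke that claim you have $g([q,1])\subset(p,1]=(0,1]$, so $[q,1]$ cannot cover $0$ there. The case $\{p,q\}=\{0,1\}$, where $g([p,q])=[p,q]$, is not treated at all.

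The missing idea is to split at a maximiser rather than at a fixed point. Take a component $(a,b)$ of $(p,q)\setminus\operatorname{Fix}(g)$; it exists because $\operatorname{Fix}(g)$ has empty interior. Then $a<b$ are consecutive fixed points, and after conjugating by $x\mapsto1-x$ you may assume $g>\mathrm{id}$ on $(a,b)$.
\begin{itemize}
\item If $a>0$: non-invariance of $[a,1]$ gives a least $c>b$ with $g(c)=a$, so $g\ge a$ on $[a,c]$. Non-invariance of $[a,c]$ then forces $z=\max g([a,c])>c$, attained at some $d\in(a,c)$. Since $g(a)=g(c)=a$ and $g(d)=z$, both $[a,d]$ and $[d,c]$ cover $[a,c]$. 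In the example, $a=0.2$, $c=0.48$, $d=0.3$.
\item If $a=0$ and $g$ vanishes on $[b,1]$: the same construction works, using surjectivity when $c=1$.
\item If $a=0$ and $g>0$ on $[b,1]$: then $g>0$ on $(0,1]$ and $g>\mathrm{id}$ on $(0,b)$ keep the orbit of any $y>0$ above $\min\{y,b,\min g([b,1])\}$. This contradicts the existence of a dense orbit.
\end{itemize}
Your Step~1 facts and your observation that two fixed points force mixing are correct, but they do not produce a one-step covering.

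Given (a), your (c) is the paper's argument. Your (b) also works once you note that $g(x)=1$ with $x>0$, together with the intermediate value theorem on $[x,1]$, already gives a second fixed point. The unjustified claim that some point beyond $x$ maps to $0$ is therefore unnecessary. The paper argues (b) differently: otherwise $g<\mathrm{id}$ on $(0,1]$, and $[0,\frac12]$ would be invariant.
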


\begin{proof}
We shall use repeatedly three consequences of transitivity. First, $g$ is surjective: $g(\I)$ is a compact interval which meets every non-empty open set, because the orbit of every point of a non-empty open set $U$ enters every non-empty open set $V$ after at least one step, and therefore $g(\I)$ is dense, hence equal to $\I$. Second, no closed interval $K\subsetneq\I$ with non-empty interior satisfies $g(K)\subset K$; otherwise the non-empty open sets $U=\operatorname{int}K$ and $V=\I\setminus K$ would satisfy $g^n(U)\cap V\subset K\cap V=\emptyset$ for all $n$. Third, there is a point $y$ whose orbit is dense in $\I$, and since $\I$ has no isolated points, such a point may be chosen outside any prescribed finite set. We also use the reflection $\rho(x)=1-x$: the map $\tilde g=\rho\circ g\circ\rho$ is transitive, has the same entropy as $g$ (it is conjugate to $g$), has the fixed points $\rho(\Fix(g))$, and satisfies $\tilde g(x)-x=-\bigl(g(1-x)-(1-x)\bigr)$, so it exchanges the roles of the inequalities $g(x)>x$ and $g(x)<x$ and of the endpoints $0$ and $1$.

We prove (a). The set $\Fix(g)$ is closed. It has empty interior: if a non-empty open set $U$ consisted of fixed points, then $g^n(U)=U$ for all $n$, and transitivity would force $U$ to meet every non-empty open set, so $U$ would be dense and, by continuity, $g$ would be the identity, which is not transitive. Let $p<q$ be two fixed points. The open set $(p,q)\setminus\Fix(g)$ is non-empty, and the endpoints of any of its components are fixed points $a<b$ with no fixed point in $(a,b)$. The continuous function $g(x)-x$ does not vanish on $(a,b)$, hence has constant sign there. Replacing $g$ by $\tilde g$ if necessary, we may assume that $g(x)>x$ for all $x\in(a,b)$.

Suppose first that $a>0$. If we had $g(x)\ge a$ for all $x>b$, then, since $g(x)\ge x\ge a$ for $x\in[a,b]$, the interval $[a,1]$, which is a proper closed subinterval with non-empty interior, would be invariant; this is impossible. Hence some $x>b$ satisfies $g(x)<a$. As $g(b)=b>a$, the intermediate value theorem shows that the closed set $\{x\in[b,1]:g(x)=a\}$ is non-empty; let $c$ be its minimum. Then $c>b$. Moreover $g(x)>a$ for every $x\in(a,c)$: for $x\in(a,b]$ this holds because $g(x)\ge x>a$, and for $x\in[b,c)$ because $g(b)>a$ and $g$ does not take the value $a$ on $[b,c)$. Let $z=\max g([a,c])$ and let $d\in[a,c]$ be a point with $g(d)=z$. Since $g\ge a$ on $[a,c]$, if we had $z\le c$, then $g([a,c])\subset[a,c]$, which is impossible because $[a,c]\ne\I$. Hence $z>c$, and $d\in(a,c)$ because $g(a)=g(c)=a<z$. The image of an interval under $g$ is an interval, so
\[
g([a,d])\supset[g(a),g(d)]=[a,z]\supset[a,c],\qquad g([d,c])\supset[g(c),g(d)]=[a,z]\supset[a,c].
\]
The intervals $[a,d]$ and $[d,c]$ are non-degenerate, have disjoint interiors and their union is $[a,c]$; thus they form a $2$-horseshoe (Figure~\ref{fig:twofixed}).

Suppose next that $a=0$ and that $g$ vanishes somewhere on $[b,1]$. Let $c$ be the least zero of $g$ in $[b,1]$; then $c>b$ because $g(b)=b>0$, and $g>0$ on $(0,c)$, by the same argument as before. Let $z=\max g([0,c])=g(d)$. If $c<1$, then $z>c$, since otherwise $[0,c]$ would be a proper invariant interval. If $c=1$, then $z=\max g(\I)=1$ by surjectivity. In both cases $z\ge c$, the point $d$ lies in $(0,c)$, and $g([0,d])\supset[0,z]\supset[0,c]$, $g([d,c])\supset[0,z]\supset[0,c]$, so $[0,d],[d,c]$ is a $2$-horseshoe.

It remains to exclude the possibility that $a=0$ and $g$ has no zero in $[b,1]$. In that case $g>0$ on $(0,1]$, because $g(x)>x>0$ on $(0,b)$. Let $y$ be a point with dense orbit and $y\neq0$. Put $\eta=\min g([b,1])$, which is positive since $g$ is positive on the compact set $[b,1]$, and $m=\min\{y,\eta,b\}>0$. We claim that $g^n(y)\ge m$ for all $n\ge0$. This holds for $n=0$. If $g^n(y)\ge m$, then either $g^n(y)\in[m,b)$, in which case $g^{n+1}(y)>g^n(y)\ge m$ because $g(x)>x$ on $(0,b)$, or $g^n(y)\in[b,1]$, in which case $g^{n+1}(y)\ge\eta\ge m$. Thus the orbit of $y$ misses the non-empty open set $[0,m)$, contradicting its density. Hence $g$ always has a $2$-horseshoe, and $\htop(g)\ge\log2$ by the horseshoe bound of Section~\ref{sec:prelim}.

For (b), by passing to $\tilde g$ we may assume $g(0)=0$. If $g$ had no fixed point in $(0,1]$, then $g(1)<1$, and the continuous function $g(x)-x$, which does not vanish on $(0,1]$ and is negative at $1$, would be negative on all of $(0,1]$. Then $g([0,\frac12])\subset[0,\frac12]$, because $0\le g(x)<x$ for $x\in(0,\frac12]$ and $g(0)=0$; this contradicts transitivity. Hence $g$ has a second fixed point and (a) applies.

Assertion (c) follows from (a) and Theorem~\ref{thm:known}(c): a transitive map with at least two fixed points has entropy at least $\log2$; a transitive map has at least one fixed point, because $g(0)\ge0$ and $g(1)\le1$; and a mixing map has entropy greater than $\frac12\log2$.
\end{proof}

\begin{figure}[t]
\centering
\begin{tikzpicture}
\begin{axis}[width=7.4cm,height=7.4cm,xmin=0,xmax=1,ymin=0,ymax=1,axis lines=box,
  xtick={0.2,0.4,0.6,0.8},xticklabels={$a$,$b$,$d$,$c$},ytick={0.2,0.8,0.95},yticklabels={$a$,$c$,$z$}]
\fill[orange!20] (axis cs:0,0.2) rectangle (axis cs:1,0.8);
\addplot[very thick,blue!70!black] coordinates {(0,0.35) (0.2,0.2) (0.3,0.38) (0.4,0.4) (0.6,0.95) (0.8,0.2) (1,0.6)};
\addplot[domain=0:1,samples=2,black!45,dotted]{x};
\draw[dashed,gray] (axis cs:0.6,0)--(axis cs:0.6,0.95) (axis cs:0.8,0)--(axis cs:0.8,0.2) (axis cs:0,0.95)--(axis cs:0.6,0.95);
\draw[decorate,decoration={brace,amplitude=4pt},thick,red!70!black] (axis cs:0.2,0.04)--(axis cs:0.595,0.04) node[midway,above=4pt,font=\scriptsize]{$J_1$};
\draw[decorate,decoration={brace,amplitude=4pt},thick,red!70!black] (axis cs:0.605,0.04)--(axis cs:0.8,0.04) node[midway,above=4pt,font=\scriptsize]{$J_2$};
\fill (axis cs:0.2,0.2) circle (1.5pt) (axis cs:0.4,0.4) circle (1.5pt);
\end{axis}
\end{tikzpicture}
\caption{Proof of Theorem~\ref{thm:interval}(a) when $a>0$: consecutive fixed points $a<b$ with $g>\mathrm{id}$ on $(a,b)$, the first point $c>b$ with $g(c)=a$, and the maximum $z>c$ of $g$ on $[a,c]$ at $d$. Both $J_1=[a,d]$ and $J_2=[d,c]$ cover the shaded band $[a,c]$.}
\label{fig:twofixed}
\end{figure}

%=====================================================================
\section{Constructions}\label{sec:constructions}
%=====================================================================

\begin{proposition}\label{prop:zigzag}
For $\lambda\ge2$ the map $Z_\lambda(x)=\operatorname{dist}(\lambda x,2\Z)$ is an exact map of $\I$: for every non-degenerate interval $J\subset\I$ there is $m$ with $Z_\lambda^n(J)=\I$ for all $n\ge m$. It is mixing, it fixes $0$ and has at least two fixed points, and $\htop(Z_\lambda)=\log\lambda$.
\end{proposition}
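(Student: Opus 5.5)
The plan is to work from the explicit form of $Z_\lambda$. On $\I$ we have $\lambda x\in[0,\lambda]$, and $\operatorname{dist}(t,2\Z)$ is the tent-like sawtooth with values in $[0,1]$. Hence $Z_\lambda$ maps $\I$ into $\I$. It is piecewise linear with slopes $\pm\lambda$, with breakpoints at the points $x=k/\lambda$, $k=1,\dots,\lfloor\lambda\rfloor$. Moreover $Z_\lambda(0)=\operatorname{dist}(0,2\Z)=0$. The entropy statement is then immediate from the Misiurewicz--Szlenk fact quoted in Section~\ref{sec:prelim}: a constant-slope $\pm\lambda$ map with $\lambda\ge1$ has entropy $\log\lambda$.

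For the second fixed point I would look on the first lap $[0,1/\lambda]$, where $Z_\lambda(x)=\lambda x$ has only the fixed point $0$. On the second lap $[1/\lambda,\min(2/\lambda,1)]$ we have $Z_\lambda(x)=2-\lambda x$, whose fixed point is $x=2/(1+\lambda)$. This point lies in $[1/\lambda,2/\lambda]\cap\I$ precisely because $\lambda\ge1$. So $Z_\lambda$ has at least two fixed points. Mixing follows from exactness, since for open $U,V$ the set $Z_\lambda^n(U)=\I$ meets $V$ for all large $n$.

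The main step is exactness. Take a non-degenerate interval $J$. While $Z_\lambda^n(J)$ contains no breakpoint of $Z_\lambda$ in its interior, $Z_\lambda$ is affine on it with slope $\pm\lambda$, so its length is multiplied by $\lambda\ge2$. This cannot continue forever, because lengths are bounded by $1$. So some iterate $K=Z_\lambda^n(J)$ contains a breakpoint $k/\lambda$ in its interior. I then need a growth statement that survives folding. If $K$ contains a breakpoint, then $Z_\lambda(K)$ contains a one-sided neighbourhood of $0$ or of $1$, because the breakpoints map to $0$ (even $k$) or $1$ (odd $k$). In fact $Z_\lambda(K)\supset[0,\lambda\delta]$ or $[1-\lambda\delta,1]$, where $\delta$ is the length of the longer side of $K$ around the breakpoint.

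So I reduce to intervals of the form $[0,\varepsilon]$ or $[1-\varepsilon,1]$. Since $Z_\lambda([0,\varepsilon])=[0,\lambda\varepsilon]$ while $\varepsilon<1/\lambda$, iterating gives $[0,1/\lambda]$, whose image is $\I$. For $[1-\varepsilon,1]$, the image is $[Z_\lambda(1)-\lambda\varepsilon,\,Z_\lambda(1)]$ or similar, depending on the last lap near $1$. This is where care is needed. When $\lambda$ is not an integer, $Z_\lambda(1)$ may be strictly inside $(0,1)$. In that case I would instead argue that once the interval $K$ contains a full lap $[k/\lambda,(k+1)/\lambda]$, its image is $\I$.

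To guarantee a full lap I would use the standard argument. If $K$ meets at most one breakpoint and contains no full lap, then one of the two pieces of $K$ cut at the breakpoint has length at least $|K|/2$. Its image then has length at least $\lambda|K|/2\ge|K|$, and this image is a one-sided neighbourhood of $0$ or $1$. A one-sided neighbourhood of $0$ grows to contain $[0,1/\lambda]$, as above. For a neighbourhood $[1-\varepsilon,1]$ of $1$, one further step either contains the breakpoint $(k)/\lambda$ adjacent to $1$ together with a neighbourhood of it, hence its image contains a neighbourhood of $0$ or $1$ of length about $\lambda$ times larger, or, if $1$ lies in the interior of a lap, it expands affinely. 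Either way the length gains a factor of at least $\lambda/2\ge1$, and a factor $\lambda>1$ on the affine steps. The genuinely delicate case is $\lambda=2$, where the factor $\lambda/2$ equals $1$. There I would check directly that $Z_2$ is the full tent map, so that $Z_2^n$ maps each dyadic interval of length $2^{-n}$ onto $\I$. Once $\I$ is reached, $Z_\lambda(\I)=\I$ by surjectivity, so $Z_\lambda^n(J)=\I$ for all larger $n$. I expect this bookkeeping near the endpoint $1$ for non-integer $\lambda$ to be the main obstacle.
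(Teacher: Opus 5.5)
Your proposal is correct and follows essentially the paper's route. While at most one turning point lies in the interior, the longer affine piece grows by a factor $\lambda/2>1$; once two do, the interval contains a full lap and its image is $\I$; and $\lambda=2$ is handled by the dyadic intervals of the tent map. (Your explicit fixed point $2/(1+\lambda)$ merely replaces the paper's intermediate value argument.) The detour through one-sided neighbourhoods of $0$ and $1$ is unnecessary: the length-growth argument alone settles $\lambda>2$, so the bookkeeping near the endpoint $1$ that you flag as the main obstacle never actually arises.
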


\begin{proof}
Write $Z=Z_\lambda$. The map $Z$ is piecewise linear with slope $\pm\lambda$, and its turning points in $(0,1)$ are the points $k/\lambda$ with $1\le k<\lambda$, where $Z$ takes alternately the values $1$ and $0$. Since $\lambda\ge2$, the interval $[0,\frac1\lambda]$ is mapped by $x\mapsto\lambda x$ onto $\I$, so $Z(\I)=\I$, and every lap $[\frac k\lambda,\frac{k+1}\lambda]$ with $k+1\le\lambda$ is mapped onto $\I$; we call such laps \emph{full}. Let $J$ be a non-degenerate interval. Assume first $\lambda>2$, and put $J_0=J$. Given $J_m$, three situations are possible. If the interior of $J_m$ contains no turning point, $Z$ is affine on $J_m$ and we put $J_{m+1}=Z(J_m)$, so that $|J_{m+1}|=\lambda|J_m|$. If the interior of $J_m$ contains exactly one turning point, it splits $J_m$ into two intervals on each of which $Z$ is affine; the longer one has length at least $\frac12|J_m|$, and we let $J_{m+1}$ be its image, so that $J_{m+1}\subset Z(J_m)$ and $|J_{m+1}|\ge\frac\lambda2|J_m|$. If the interior of $J_m$ contains at least two turning points, then $J_m$ contains a lap whose endpoints are both turning points, which is full, so $Z(J_m)=\I$. In the first two situations $|J_{m+1}|\ge\frac\lambda2|J_m|$ with $\frac\lambda2>1$, and all lengths are at most $1$; hence the third situation occurs for some $m$, and then $Z^{m+1}(J)\supset Z(J_m)=\I$. As $Z(\I)=\I$, we get $Z^n(J)=\I$ for all $n\ge m+1$. If $\lambda=2$, then $Z$ is the tent map, $J$ contains a dyadic interval $[j2^{-k},(j+1)2^{-k}]$, and $Z^k$ maps it affinely onto $\I$; again $Z^n(J)=\I$ for all $n\ge k$. Exactness implies mixing: if $U,V$ are non-empty open sets, $U$ contains a non-degenerate interval $J$ and $Z^n(U)\supset Z^n(J)=\I$ meets $V$ for all large $n$. The point $0$ is fixed, and since $Z$ decreases from $1$ to $0$ on $[\frac1\lambda,\frac2\lambda]\subset\I$, the function $Z(x)-x$ changes sign there, so there is a second fixed point. Finally $\htop(Z)=\log\lambda$ by \cite{MS80}.
\end{proof}

\begin{proposition}\label{prop:Dmap}
There is a mixing map $D_I$ of $\I$ with $D_I(0)=0$, $D_I(1)=1$, infinitely many fixed points and $\htop(D_I)=\infty$.
\end{proposition}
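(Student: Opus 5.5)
The map will be a countable concatenation of rescaled copies of the zigzag maps $Z_\lambda$ of Proposition~\ref{prop:zigzag} with slopes going to infinity, accumulating at the endpoint $1$. Fix the points $a_k=1-2^{-k}$ ($k\ge0$) and the intervals $I_k=[a_k,a_{k+1}]$. On $I_k$ let $D_I$ be piecewise linear with $D_I(a_k)=a_k$ and $D_I(a_{k+1})=a_{k+1}$. In between, it oscillates with many laps whose images cover $[0,a_{k+2}]$, and the number of laps grows with $k$. Put $D_I(1)=1$. Continuity at $1$ is the delicate point, since laps near $1$ must reach down to $0$. To handle it, I would instead let $D_I$ on $I_k$ be a graph that starts at $a_k$, dips to $0$, rises to $a_{k+2}$, and ends at $a_{k+1}$. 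This is still discontinuous at $1$, because the dips to $0$ persist.

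So I would change the design so that continuity at $1$ holds and mixing is instead transported by chains. On $I_k$, take $D_I$ piecewise linear fixing $a_k$ and $a_{k+1}$, with $D_I(I_k)=[a_{k-1},a_{k+2}]$ (with $a_{-1}:=0$), and with $n_k\to\infty$ full oscillations between $a_{k-1}$ and $a_{k+2}$. Then $|D_I(x)-x|\le 2^{-k+1}$ on $I_k$, which gives continuity at $1$. Every $a_k$ is fixed, so there are infinitely many fixed points.

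\textbf{Entropy.} I will show the entropy is infinite. Inside $I_k$ the $n_k$ oscillations give roughly $n_k/2$ subintervals, each mapped onto a set containing $I_k$. This is an $(n_k/2)$-horseshoe, so Theorem~\ref{thm:compactprops}(a)/the horseshoe bound gives $\htop(D_I)\ge\log(n_k/2)\to\infty$.

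\textbf{Mixing.} I will prove exactness, which implies mixing. Take a non-degenerate interval $J$. I use the expansion argument of Proposition~\ref{prop:zigzag}, with the slopes on $I_k$ all at least $2$. Iterating as there, the images grow until some image contains a full lap of some $I_k$. Then the next image contains $[a_{k-1},a_{k+2}]\supset I_{k-1}\cup I_k\cup I_{k+1}$. Since each $I_j$ contains a full lap, the image spreads by at least one index in each direction per step. After $m$ steps, $D_I^{m}(J)\supset[a_{k-1-m},a_{k+1+m}]$, so eventually it contains $[0,a_N]$ for every $N$ (using $a_{-1}=0$, so the left end reaches $0$). Hence for each open $V$ there is $N$ with $V\cap[0,a_N]\neq\emptyset$. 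Because the images are monotone increasing once they contain $[0,a_j]\cup$ a lap, this holds for all large $n$, which is mixing.

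\textbf{Main obstacle.} I expect the main difficulty to be the bookkeeping: making the laps genuinely full onto $[a_{k-1},a_{k+2}]$ while both endpoints of $I_k$ are fixed. I would try an odd number of monotone laps alternating between the values $a_{k-1}$ and $a_{k+2}$. Short linear connectors at the ends would restore $D_I(a_k)=a_k$ and $D_I(a_{k+1})=a_{k+1}$, with all slopes kept $\ge2$ so the growth lemma applies. The first interval $I_0$ needs its own handling because $a_{-1}=0=a_0$; there I would use a copy of $Z_4$ mapping $I_0$ onto $[0,a_2]$ with $0$ fixed.
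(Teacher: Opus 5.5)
Your construction is combinatorially the same as the paper's, only built directly on $\I$: the paper defines $D_+$ on $[0,\infty)$ with blocks $B_k=[k,k+1]$, $D_+(B_k)=B_{k-1}\cup B_k\cup B_{k+1}$ and a $2N_k$-horseshoe inside $B_k$, and proves mixing by the same ``expand until a whole piece is covered, then spread by one block per step'' argument; it then conjugates by $\psi(x)=1-2^{-x}$, which carries $B_k$ onto your $I_k=[a_k,a_{k+1}]$ and gives continuity at $1$ from $D_+(x)\to\infty$ instead of your estimate $|D_I(x)-x|\le 2^{-k+1}$. Two small slips, neither affecting the conclusion: the map is mixing but not exact, since $D_I([0,a_N])=[0,a_{N+1}]$ means no image of an interval inside $[0,a_N]$ is ever all of $\I$ (your spreading argument in fact proves mixing directly, which is all that is needed); and the growth step needs all slopes bounded below by a constant strictly greater than $2$, because with slope $2$ the halving step gives no growth, but your steep laps and connectors satisfy this anyway.
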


\begin{proof}
For $k\ge0$ put $B_k=[k,k+1]$ and $N_k=k+2$. We define $D_+\colon[0,\infty)\to[0,\infty)$ blockwise. For $k\ge1$ divide $B_k$ into $6N_k+1$ intervals of equal length, called the \emph{pieces} of $B_k$, and prescribe the values of $D_+$ at the $6N_k+2$ division points, from left to right, by the sequence
\[
k,\ \underbrace{k-1,\,k,\,k+1,\,k+2,\,k+1,\,k}_{\text{repeated }N_k\text{ times}},\ k+1 ,
\]
interpolating affinely on each piece. For $k=0$ divide $B_0$ into $4N_0+1$ pieces and use the sequence $0,\ (1,2,1,0)$ repeated $N_0$ times, $1$. Consecutive values in these sequences differ by $1$, so every piece is mapped affinely onto one of the blocks; the value at the left end of $B_k$ is $k$ and at the right end $k+1$, so the definitions agree at common endpoints and $D_+$ is continuous with $D_+(k)=k$ for all $k$. By construction
\[
D_+(B_0)=B_0\cup B_1,\qquad D_+(B_k)=B_{k-1}\cup B_k\cup B_{k+1}\quad(k\ge1).
\]
Each piece has length at most $\frac15$ and is mapped onto an interval of length $1$, so $|D_+'|\ge5$ wherever the derivative exists. For $k\ge1$, each of the $N_k$ repetitions contains the two steps $k\to k+1$ and $k+1\to k$, so $B_k$ contains $2N_k$ pieces, with pairwise disjoint interiors, each of which is mapped onto $B_k$; they form a $2N_k$-horseshoe, and Theorem~\ref{thm:truncation}(d) gives $\ent(D_+)\ge\log(2N_k)$ for every $k$, so $\ent(D_+)=\infty$.

We show that $D_+$ is mixing. Let $J\subset[0,\infty)$ be a non-degenerate compact interval and $J_0=J$. Every interval of length at least $\frac25$ contains a whole piece, since the piece containing its left endpoint and the next piece have total length at most $\frac25$. As in (a), as long as $J_m$ contains no whole piece, its interior contains at most one division point, and we can choose an interval $J_{m+1}\subset D_+(J_m)$ with $|J_{m+1}|\ge\frac52|J_m|$. Hence after finitely many steps some $J_m\subset D_+^m(J)$ contains a piece, whose image is a block $B_j$. From the displayed formula, by induction on $i$, $D_+^i(B_j)\supset B_l$ whenever $|l-j|\le i$, and since $D_+(B_l)\supset B_l$, the same holds for all larger $i$. Now let $U,V$ be non-empty open sets. Choose $J\subset U$ as above and a block $B_l$ whose interior meets $V$. Then $D_+^{m+1+i}(U)\supset D_+^i(B_j)\supset B_l$ meets $V$ for all $i\ge|l-j|$. Thus $D_+$ is mixing.

Finally let $\psi(x)=1-2^{-x}$, a homeomorphism of $[0,\infty)$ onto $[0,1)$, and put $D_I=\psi D_+\psi^{-1}$ on $[0,1)$ and $D_I(1)=1$. Since $D_+(B_k)\subset[k-1,k+2]$, $D_+(x)\to\infty$ as $x\to\infty$, so $D_I$ is continuous at $1$. The set $[0,1)$ is open, dense and invariant, and $D_I$ restricted to it is conjugate to the mixing map $D_+$; as every non-empty open subset of $\I$ meets $[0,1)$ in a non-empty open set, $D_I$ is mixing. It fixes $0$, $1$ and all $\psi(k)$, and $\htop(D_I)\ge\ent(D_I|_{[0,1)})=\ent(D_+)=\infty$.
\end{proof}

\begin{figure}[t]
\centering
\begin{tikzpicture}
\begin{axis}[width=7.2cm,height=7.2cm,xmin=0,xmax=3,ymin=0,ymax=4.1,axis lines=box,xtick={0,1,2,3},ytick={0,1,2,3,4},
 title={\small $D_+$ on $B_0\cup B_1\cup B_2$ (few pieces)},tick label style={font=\scriptsize}]
\fill[blue!6] (axis cs:1,0) rectangle (axis cs:2,3);
\addplot[thick,blue!70!black] table {
0.00000 0
0.20000 1
0.40000 2
0.60000 1
0.80000 0
1.00000 1
1.07692 0
1.15385 1
1.23077 2
1.30769 3
1.38462 2
1.46154 1
1.53846 0
1.61538 1
1.69231 2
1.76923 3
1.84615 2
1.92308 1
2.00000 2
2.07692 1
2.15385 2
2.23077 3
2.30769 4
2.38462 3
2.46154 2
2.53846 1
2.61538 2
2.69231 3
2.76923 4
2.84615 3
2.92308 2
3.00000 3
};
\addplot[domain=0:3,samples=2,black!45,dotted]{x};
\draw[gray!60,dashed] (axis cs:0,1)--(axis cs:3,1) (axis cs:0,2)--(axis cs:3,2) (axis cs:0,3)--(axis cs:3,3);
\end{axis}
\end{tikzpicture}
\qquad
\begin{tikzpicture}
\begin{axis}[width=6.4cm,height=6.4cm,xmin=0,xmax=1,ymin=0,ymax=1,axis lines=box,xtick={0,0.5,1},ytick={0,0.5,1},
 title={\small $g$ built from $G=Z_3$},tick label style={font=\scriptsize}]
\addplot[thick,blue!70!black] coordinates {(0,1) (0.16667,0.5) (0.33333,1) (0.5,0.5) (1,0)};
\addplot[domain=0:1,samples=2,black!45,dotted]{x};
\draw[gray,dashed] (axis cs:0.5,0)--(axis cs:0.5,1) (axis cs:0,0.5)--(axis cs:1,0.5);
\end{axis}
\end{tikzpicture}
\caption{Left: the mixing map $D_+$ of Proposition~\ref{prop:Dmap}, drawn with $N_0=1$, $N_1=N_2=2$ (in the proof $N_k=k+2$). Every piece is mapped affinely onto a block; the block $B_1$ (shaded column) is mapped onto $B_0\cup B_1\cup B_2$ and contains $2N_1$ pieces covering $B_1$. Right: the square-root construction in the proof of the Main Theorem, which halves the entropy of a transitive map $G$ fixing $1$.}
\label{fig:spectra}
\end{figure}

\begin{lemma}\label{lem:attach}
Let $0<c<1$, let $g\in C([0,c])$ and let $A\in C(\I)$ be equal to $g$ on $[0,c]$ and affine on $[c,1]$ with $A(c)=g(c)$ and $A(1)=1$. Then $\htop(A)=\htop(g)$, every periodic orbit of $g$ is a periodic orbit of $A$, and every scrambled set of $g$ is a scrambled set of $A$.
\end{lemma}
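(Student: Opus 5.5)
Throughout, the plan rests on one observation: since $g\in C([0,c])$, the interval $[0,c]$ is invariant under $A$ and $A|_{[0,c]}=g$. The statements about periodic orbits and scrambled sets then follow at once. A periodic orbit of $g$ lies in $[0,c]$, where $A=g$, so it is a periodic orbit of $A$ with the same least period. A scrambled set $S\subset[0,c]$ for $g$ has its $A$-orbits equal to its $g$-orbits, and the metric on $[0,c]$ is the restriction of that on $\I$, so the $\liminf$/$\limsup$ conditions are unchanged.

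\textbf{Lower bound.} I would obtain $\htop(A)\ge\htop(g)$ from Theorem~\ref{thm:compactprops}(a), applied to the inclusion $[0,c]\hookrightarrow\I$. This map is injective and satisfies $A\circ\iota=\iota\circ g$.

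\textbf{Upper bound.} For $\htop(A)\le\htop(g)$ I would analyse the dynamics on $[c,1]$ and use Theorem~\ref{thm:compactprops}(d) and (b). Since $g(c)\in[0,c]$, we have $A(c)=g(c)\le c<1=A(1)$, so $A$ is increasing and affine on $[c,1]$. The function $A(x)-x$ is affine there, $\le0$ at $c$ and $=0$ at $1$, so $A(x)\le x$ on $[c,1]$. I would split into two cases.
\begin{itemize}
\item \emph{Case $g(c)=c$.} Then $A$ is the identity on $[c,1]$. Both $[0,c]$ and $[c,1]$ are closed invariant sets, and Theorem~\ref{thm:compactprops}(b) gives $\htop(A)=\max\{\htop(g),0\}=\htop(g)$.
\item \emph{Case $g(c)<c$.} Then $A(x)<x$ on $[c,1)$, so the only fixed point of $A$ in $[c,1]$ is $1$. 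Take $x\in(c,1)$. While its orbit stays in $(c,1)$, the orbit is strictly decreasing. If it stayed there forever, its limit would be a fixed point in $[c,1)$, which is impossible. So the orbit enters $[0,c]$ and remains there.
\end{itemize}

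\textbf{Main obstacle.} The step needing care is showing in the second case that every $x\in(c,1)$ is wandering, so that $\Omega(A)\subset[0,c]\cup\{1\}$. I would take a small neighbourhood $U\subset(c,1)$ of $x$ lying strictly above $A(\overline U)$; this is possible since $A$ is increasing with $A(y)<y$. Then the iterates $A^n(U)$ move strictly downward past $U$. Once they reach $[0,c]$ they stay there, and $[0,c]$ is disjoint from $U$. Hence $A^n(U)\cap U=\emptyset$ for all $n\ge1$.

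\textbf{Conclusion.} Granting this, $\Omega(A)$ is the union of the closed invariant sets $\Omega(A)\cap[0,c]$ and possibly $\{1\}$. Theorem~\ref{thm:compactprops}(d) and (b) then give $\htop(A)=\htop(A|_{\Omega(A)})\le\max\{\htop(g),0\}=\htop(g)$. Here the restriction to $\Omega(A)\cap[0,c]$ is bounded by $\htop(g)$, again by Theorem~\ref{thm:compactprops}(a) applied to an inclusion.
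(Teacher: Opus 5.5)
Your proposal is correct and follows essentially the same route as the paper: invariance of $[0,c]$ gives the orbit and scrambled-set claims and the lower bound, and you split into the cases $g(c)=c$ and $g(c)<c$. In the second case you show each $x\in(c,1)$ is wandering, so that $\Omega(A)\subset[0,c]\cup\{1\}$, and conclude with Theorem~\ref{thm:compactprops}(b),(d). Your ``iterates move strictly downward'' step is made rigorous exactly as in the paper: choose $U$ so that $A\bigl([0,\sup U]\bigr)\subset[0,\inf U)$; then $A^n(U)\subset[0,\inf U)$ for all $n\ge1$.
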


\begin{proof}
The interval $[0,c]$ is $A$-invariant and $A=g$ there, which gives the statements on periodic orbits and scrambled sets and the inequality $\htop(A)\ge\htop(g)$. If $g(c)=c$, then $A$ is the identity on $[c,1]$, the sets $[0,c]$ and $[c,1]$ are closed, invariant and cover $\I$, and Theorem~\ref{thm:compactprops}(b) gives $\htop(A)=\max\{\htop(g),0\}=\htop(g)$. Assume now $g(c)<c$. On $[c,1]$ the function $A(x)-x$ is affine, negative at $c$ and zero at $1$, so $A(x)<x$ for $c\le x<1$; moreover $A$ is increasing on $[c,1]$ because $A(1)=1>c>A(c)$. Let $x\in(c,1)$. Choose $\delta>0$ with $c<x-\delta<x+\delta<1$ and $A(x+\delta)<x-\delta$. If $y\in[0,c]$, then $A(y)\in[0,c]\subset[0,x-\delta)$, and if $y\in[c,x+\delta]$, then $A(y)\le A(x+\delta)<x-\delta$. Hence $A([0,x+\delta])\subset[0,x-\delta)$, and by induction $A^m(U)\subset[0,x-\delta)$ for $U=(x-\delta,x+\delta)$ and all $m\ge1$; so $x$ is a wandering point. Consequently the non-wandering set $\Omega(A)$ is contained in $[0,c]\cup\{1\}$. The sets $\Omega(A)\cap[0,c]$ and $\Omega(A)\cap\{1\}$ are closed and invariant, and by Theorem~\ref{thm:compactprops}(b),(h),
\[
\htop(A)=\htop(A|_{\Omega(A)})\le\max\{\htop(g),0\}=\htop(g).\qedhere
\]
\end{proof}

\begin{lemma}\label{lem:glue}
Let $A\in C([0,\frac12])$ and $B\in C([\frac12,1])$ satisfy $A(\frac12)=B(\frac12)=\frac12$, and let $\varphi\in C(\I)$ be equal to $A$ on $[0,\frac12]$ and to $B$ on $[\frac12,1]$. Then $\htop(\varphi)=\max\{\htop(A),\htop(B)\}$. If $L$ is an interval, $J\subset L$ a compact interval and $\eta\colon J\to\I$ an affine homeomorphism, then $f=\eta^{-1}\varphi\eta\circ r_J\in C(L)$ satisfies $\ent(f)=\htop(\varphi)$, and every periodic orbit and every scrambled set of $\varphi$ is mapped by $\eta^{-1}$ onto a periodic orbit and a scrambled set of $f$.
\end{lemma}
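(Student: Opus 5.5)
The proof splits into the gluing identity on $\I$ and the transfer to $L$.

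\emph{First part.} The intervals $[0,\frac12]$ and $[\frac12,1]$ are closed and cover $\I$. They are $\varphi$-invariant because $A$ and $B$ are self-maps of them, and they agree at the common point $\frac12$, which both fix. Theorem~\ref{thm:compactprops}(b) then gives $\htop(\varphi)=\max\{\htop(\varphi|_{[0,\frac12]}),\htop(\varphi|_{[\frac12,1]})\}=\max\{\htop(A),\htop(B)\}$. Continuity of $\varphi$ follows from the pasting lemma, since $A(\frac12)=B(\frac12)$.

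\emph{Second part.} Put $\tilde\varphi=\eta^{-1}\varphi\eta\in C(J)$. The clamp $r_J$ is a retraction of $L$ onto $J$, so $f=\tilde\varphi\circ r_J$ is continuous with values in $J$. The retraction remark in Section~\ref{sec:prelim} gives $\ent(\tilde\varphi\circ r_J)=\ent(\tilde\varphi)$. Since $J$ is compact, $\ent(\tilde\varphi)=\htop(\tilde\varphi)$, and this equals $\htop(\varphi)$ by conjugacy invariance (Theorem~\ref{thm:compactprops}(a) applied to the homeomorphism $\eta$). Alternatively, one can use Theorem~\ref{thm:truncation}(b): for $J'\supset J$ the truncation $f_{J'}$ equals $\tilde\varphi\circ r_J$ on $J'$, whose entropy is $\htop(\tilde\varphi)$ by the same non-wandering argument. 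The retraction remark is shorter, so I would cite it.

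\emph{Orbits and scrambled sets.} On $J$ we have $f=\tilde\varphi$, and $J$ is $f$-invariant. So if $P$ is a periodic orbit of $\varphi$, then $\eta^{-1}(P)\subset J$ is a periodic orbit of $\tilde\varphi$, hence of $f$, with the same least period. For a scrambled set $S$ of $\varphi$, the orbits of points of $\eta^{-1}(S)$ under $f$ are the $\eta^{-1}$-images of their $\varphi$-orbits. Because $\eta^{-1}$ is affine, $|f^n\eta^{-1}x-f^n\eta^{-1}y|=|J|\,|\varphi^nx-\varphi^ny|$, so the liminf and limsup conditions are preserved. The orbits lie in the compact invariant set $J$, so by the remark in Section~\ref{sec:prelim} the metric used on $L$ does not matter.

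The only delicate step is the entropy equality $\ent(f)=\htop(\varphi)$. It rests on the retraction remark: points outside $J$ are collapsed into $J$ in one step, so they contribute no compact invariant set with extra entropy. Everything else is bookkeeping.
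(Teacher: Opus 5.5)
Your proof is correct and is essentially the paper's: you use Theorem~\ref{thm:compactprops}(b) for the gluing, the retraction formula with conjugacy invariance for $\ent(f)=\htop(\varphi)$, and invariance of $J$ together with the affine (uniformly equivalent) metric comparison for periodic orbits and scrambled sets. The only loose end is in the aside you set aside: Theorem~\ref{thm:truncation}(b) takes the supremum over all compact $J'\subset L$, so computing $\htop(f_{J'})$ only for $J'\supset J$ also needs the remark that $\htop(f_{J'})$ is monotone in $J'$ (which follows from part (a)), whereas the retraction formula you actually cite avoids this.
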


\begin{proof}
The sets $[0,\frac12]$ and $[\frac12,1]$ are closed, invariant and cover $\I$, so the first statement is Theorem~\ref{thm:compactprops}(b). The map $r_J$ is a retraction of $L$ onto $J$, so the retraction formula and conjugacy invariance of Section~\ref{sec:prelim} give $\ent(f)=\htop(\eta^{-1}\varphi\eta)=\htop(\varphi)$. Since $f=\eta^{-1}\varphi\eta$ on the invariant set $J$, orbits of points of $J$ are the same for both maps. For scrambled sets note that on the compact set $J$ the metric of $L$ and the metric transported by $\eta$ are uniformly equivalent, so the conditions $\liminf_n d(f^nx,f^ny)=0$ and $\limsup_n d(f^nx,f^ny)>0$ do not depend on the choice.
\end{proof}

\begin{lemma}\label{lem:allvalues}
For every $a\in[0,\infty]$ there is $G\in C(\I)$ with $G(0)=0$ and $\htop(G)=a$.
\end{lemma}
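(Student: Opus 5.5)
We split according to the value of $a$ and use explicit piecewise linear models together with the results already proved.

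For $a=0$ we take $G=\mathrm{id}$; it fixes $0$ and has entropy $0$.

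For $a=\infty$ we take $G=D_I$ from Proposition~\ref{prop:Dmap}. It fixes $0$ and has $\htop(D_I)=\infty$.

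For $a\ge\log2$ we take $\lambda=e^a\ge2$ and $G=Z_\lambda$. By Proposition~\ref{prop:zigzag} it fixes $0$ and $\htop(Z_\lambda)=\log\lambda=a$.

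For $0<a<\log2$ we put $s=e^a\in(1,2)$ and use a map of constant slope $\pm s$ that fixes $0$. The tent map $T_s(x)=s\min\{x,1-x\}$ maps $\I$ into $[0,s/2]\subset\I$ and fixes $0$. It is piecewise linear with constant slope $\pm s$ and $s\ge1$, so \cite{MS80} gives $\htop(T_s)=\log s=a$, as quoted in Section~\ref{sec:prelim}. Thus $G=T_s$ works directly.

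If one prefers to avoid applying \cite{MS80} to a non-surjective map, there is an alternative. The image $[0,s/2]$ is an invariant interval, and $\htop(T_s)=\htop(T_s|_{[0,s/2]})$ by Theorem~\ref{thm:compactprops}(d), since all points leave $(s/2,1]$ after one step. Rescaling $[0,s/2]$ affinely onto $\I$ gives a map that is surjective, has constant slope $\pm s$ and still fixes $0$. Conjugacy invariance then gives the same entropy.

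The only point requiring care is the applicability of the constant-slope entropy formula, which is stated in Section~\ref{sec:prelim} for any piecewise linear map of a compact interval with constant slope $\pm\lambda$, $\lambda\ge1$; $T_s$ is such a map. The case $a=\infty$ relies on Proposition~\ref{prop:Dmap}, which is already established. No further obstacle is expected.
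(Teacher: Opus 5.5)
Your proof is correct. For $a=0$, $a\ge\log2$ and $a=\infty$ it coincides with the paper's argument, which uses the identity, $Z_{e^a}$ and $D_I$. The two proofs differ only for $0<a<\log2$: you take the tent map $T_{e^a}$, while the paper takes $G(x)=\min\{e^ax,1\}$. Your choice is not merely an alternative here, because the paper's map does not work.

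\begin{itemize}
\item The paper's $G$ is nondecreasing, so $G^n(x)=\min\{e^{na}x,1\}$ is nondecreasing from $0$ to $1$.
\item Hence $\operatorname{Var}(G^n)=1$ for every $n$, and $\lim\frac1n\log\operatorname{Var}(G^n)=0$.
\item Directly: every orbit starting in $(0,1]$ reaches the fixed point $1$ after finitely many steps, so $\htop(G)=0\neq a$.
\end{itemize}

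The expansion by $e^a$ on $[0,e^{-a}]$ produces no entropy because there is no folding. Your tent map supplies the folding.

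The constant-slope formula applies to $T_s$ with no surjectivity hypothesis. Since $T_s$ has non-zero slope on both laps, only finitely many points are sent to $\frac12$ by some $T_s^i$ with $i<n$. Therefore $|(T_s^n)'|=s^n$ off a finite set, $\operatorname{Var}(T_s^n)=s^n$, and Misiurewicz--Szlenk gives $\htop(T_s)=\log s$. Your second paragraph, restricting to $[0,s/2]$ and rescaling, is correct but unnecessary.

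This matters downstream. The lemma is what supplies the values $a\in(0,\log2)$ in parts (iii) and (iv) of the Main Theorem. One example is the map $B$ glued in the proof of (iv) to realise $a\in(\log\sigma_3,\log2)$; another is the identity $\Spec(\I)=[0,\infty]$ used for $\Spec_{\TE}(\I)=(0,\infty]$. Your version of the lemma is the one that makes those steps go through.
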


\begin{proof}
For $a<\infty$ take $G=Z_{e^a}$ if $a\ge\log 2$ (Proposition~\ref{prop:zigzag}) and $G(x)=\min\{e^ax,1\}$ otherwise; the latter is piecewise linear with slopes $e^a$ and $0$, so its entropy is at most $a$ by the Lipschitz bound and, if $a>0$, it has the fixed point $1$ and an interval $[0,e^{-a}]$ on which it is expanding by $e^a$, whence $\htop(G)=\lim\frac1n\log\operatorname{Var}(G^n)=a$ by \cite{MS80}; if $a=0$ take $G=\mathrm{id}$. For $a=\infty$ take $G=D_I$.
\end{proof}

%=====================================================================
\section{Proof of the Main Theorem}\label{sec:proof}
%=====================================================================

\begin{proof}[Proof of (ii)]
Let $\cT_{\ge2}$ be the class of transitive maps of $\I$ with at least two fixed points. By Theorem~\ref{thm:interval}(a), $\Spec_{\cT_{\ge2}}(\I)\subset[\log2,\infty]$. By Propositions~\ref{prop:zigzag} and~\ref{prop:Dmap}, the maps $Z_\lambda$, $\lambda\ge2$, and $D_I$ are mixing, have at least two fixed points, and realise every value in $[\log2,\infty]$; hence $\Spec_{\cT_{\ge2}}(\I)=[\log2,\infty]$, and the same holds for mixing maps with two fixed points. The remaining assertions of (ii) are Theorem~\ref{thm:interval}(c).
\end{proof}

\begin{proof}[Proof of (i)]
Since $\CHD(\I)$ is the class of transitive maps of $\I$, Theorem~\ref{thm:known}(c) gives $\Spec_{\CHD}(\I)\subset[\frac12\log2,\infty]$ and $\Spec_\cM(\I)\subset(\frac12\log2,\infty]$, and by (ii) both contain $[\log2,\infty]$. Let $a\in[\log2,\infty]$, and let $G$ be a transitive map of $\I$ with $G(1)=1$ and $\htop(G)=a$; such a map is obtained by conjugating $Z_\lambda$ ($\lambda=e^a$) or $D_I$ with $x\mapsto1-x$, or by taking $D_I$ itself. Define
\[
g(x)=1-\tfrac12G(2x)\ \ (0\le x\le\tfrac12),\qquad g(x)=1-x\ \ (\tfrac12\le x\le1).
\]
Both formulas give $\frac12$ at $x=\frac12$, so $g\in C(\I)$; moreover $g$ maps $[0,\frac12]$ onto $[\frac12,1]$ and $[\frac12,1]$ homeomorphically onto $[0,\frac12]$. For $x\in[0,\frac12]$ we have $g^2(x)=\frac12G(2x)$, so $g^2|_{[0,1/2]}$ is conjugate to $G$ by $x\mapsto2x$; for $y\in[\frac12,1]$ we have $g^2(y)=1-\frac12G(2-2y)$, so $g^2|_{[1/2,1]}$ is conjugate to $G$ as well. We show that $g$ is transitive. Let $U,V$ be non-empty open subsets of $\I$. If $U$ meets $[0,\frac12)$, let $U'=U\cap[0,\frac12)$ and $s=0$; otherwise $U$ meets $(\frac12,1]$, and we let $U'=g(U\cap(\frac12,1])$, which is a non-empty relatively open subset of $[0,\frac12)$, and $s=1$. Similarly, if $V$ meets $[0,\frac12)$, let $V'=V\cap[0,\frac12)$ and $t=0$; otherwise let $V'=(g|_{[0,1/2]})^{-1}(V)$, a non-empty relatively open subset of $[0,\frac12]$, and $t=1$. By transitivity of $g^2|_{[0,1/2]}$ there is $m\ge1$ with $g^{2m}(U')\cap V'\ne\emptyset$, and then $g^{s+2m+t}(U)\cap V\ne\emptyset$. The closed sets $[0,\frac12]$ and $[\frac12,1]$ are $g^2$-invariant and cover $\I$, so Theorem~\ref{thm:compactprops}(b),(c) give $2\htop(g)=\htop(g^2)=\htop(G)=a$. Thus $\Spec_{\CHD}(\I)\supset\frac12[\log2,\infty]\cup[\log2,\infty]=[\frac12\log2,\infty]$, which proves $\Spec_{\CHD}(\I)=[\frac12\log2,\infty]$.

For the mixing spectrum it remains to realise the values in $(\frac12\log2,\log2)$. For $s\in(\sqrt2,2)$ let $T_s(x)=s\min\{x,1-x\}$ and let $K_s=[T_s^2(\frac12),T_s(\frac12)]$ be its core. The interval $K_s$ is invariant, $T_s|_{K_s}$ is mixing (Theorem~\ref{thm:known}(e)), and it is piecewise linear with slope $\pm s$, so $\htop(T_s|_{K_s})=\log s$ by \cite{MS80}. Conjugating by an affine homeomorphism $K_s\to\I$ we obtain mixing maps of $\I$ with entropy $\log s$ for every $s\in(\sqrt2,2)$. Hence $\Spec_\cM(\I)=(\frac12\log2,\infty]$.
\end{proof}

\begin{proof}[Proof of (iv)]
Let $L$ be an interval, fix a compact interval $J\subset L$ and an affine homeomorphism $\eta\colon J\to\I$. Let $n\ge1$ and let $f\in C(L)$ have a point of least period $n$; let $P$ be the orbit of such a point and $J_P=\conv P$. Since $f(P)=P\subset J_P$, the truncation $f_{J_P}$ agrees with $f$ on $P$, so $P$ is an orbit of least period $n$ of the compact interval map $f_{J_P}$; Theorem~\ref{thm:known}(a) gives $\htop(f_{J_P})\ge\log\sigma_n$, and Theorem~\ref{thm:truncation}(b) gives $\ent(f)\ge\log\sigma_n$. Conversely, let $a\in[\log\sigma_n,\infty]$. Let $g_n\in C(\I)$ be a map with a point of least period $n$ and $\htop(g_n)=\log\sigma_n$ (Theorem~\ref{thm:known}(a)), transported affinely to $[0,\frac12]$. Applying Lemma~\ref{lem:attach} with $c=\frac12$ we obtain a map of $\I$ fixing $1$, and conjugating it by $x\mapsto x/2$ we obtain $A\in C([0,\frac12])$ with $A(\frac12)=\frac12$; then $A$ has a point of least period $n$ and $\htop(A)=\log\sigma_n$. By Lemma~\ref{lem:allvalues} there is $G\in C(\I)$ with $G(0)=0$ and $\htop(G)=a$; put $B(x)=\frac12+\frac12G(2x-1)$ for $x\in[\frac12,1]$, so that $B(\frac12)=\frac12$ and $\htop(B)=a$. Lemma~\ref{lem:glue} gives $f\in C(L)$ with $\ent(f)=\max\{\log\sigma_n,a\}=a$ and a point of least period $n$. Hence the spectrum of maps of $L$ with a point of least period $n$ is $[\log\sigma_n,\infty]$.

If $f\in C(L)$ has points of all periods, it has a point of least period $3$, so $\ent(f)\ge\log\sigma_3=\log\frac{1+\sqrt5}2$. Conversely, the map $\varphi$ constructed above with $n=3$ is a map of a compact interval with a point of least period $3$, so by Theorem~\ref{thm:known}(b) it has points of all periods; by Lemma~\ref{lem:glue} so does the corresponding $f\in C(L)$, and $\ent(f)=a$ for any prescribed $a\in[\log\sigma_3,\infty]$.

Finally, let $s\in C(\I)$ be a Li--Yorke chaotic map with $\htop(s)=0$ (Theorem~\ref{thm:known}(g)), transported to $[0,\frac12]$, and let $A\in C([0,\frac12])$ be obtained from it by Lemma~\ref{lem:attach} and the conjugation $x\mapsto x/2$ as above; then $A$ is Li--Yorke chaotic, $A(\frac12)=\frac12$ and $\htop(A)=0$. Gluing $A$ with the map $B$ above, Lemma~\ref{lem:glue} gives, for every $a\in[0,\infty]$, a Li--Yorke chaotic $f\in C(L)$ with $\ent(f)=\max\{0,a\}=a$. Hence $\Spec_{\CHL}(L)=[0,\infty]$.
\end{proof}

\begin{proof}[Proof of (iii)]
If $f\in C(\I)$ has $\htop(f)>0$, then $f$ has an uncountable scrambled set by Theorem~\ref{thm:known}(f), so $\TE(\I)\subset\CHL(\I)$ and $\Spec_{\TE}(\I)=\Spec(\I)\cap(0,\infty]\subset\Spec_{\CHL}(\I)$. By Lemma~\ref{lem:allvalues}, $\Spec(\I)=[0,\infty]$, so $\Spec_{\TE}(\I)=(0,\infty]$; and $\Spec_{\CHL}(\I)=[0,\infty]$ is the case $L=\I$ of (iv). The inclusions are strict because $\frac14\log2\in\Spec_{\TE}(\I)\setminus\Spec_{\CHD}(\I)$ by (i) and $0\in\Spec_{\CHL}(\I)\setminus\Spec_{\TE}(\I)$.
\end{proof}

\begin{remark}
The proof of (i) shows more: every value in $[\frac12\log2,\log2)$ is realised by a transitive map with a unique fixed point which is not mixing, every value in $(\frac12\log2,\infty]$ by a mixing map, and every value in $[\log2,\infty]$ by a mixing map with at least two fixed points. By (ii) these ranges cannot be enlarged: no transitive map of entropy below $\log 2$ has two fixed points.
\end{remark}

\subsection*{Acknowledgements}
The author thanks the management of Muthoot Institute of Technology and Science, Kochi, Kerala, India for providing the facilities to conduct the research.


\begin{thebibliography}{99}\small
\bibitem{ABG24} Ll. Alsedà, L. Bordignon, J. Groisman, \emph{Topological entropy, sets of periods, and transitivity for circle maps}, Ukrain. Mat. Zh. \textbf{76} (2024), 31--47.

\bibitem{AKM65} R. L. Adler, A. G. Konheim, M. H. McAndrew, \emph{Topological entropy}, Trans. Amer. Math. Soc. \textbf{114} (1965), 309--319.
\bibitem{AKLS99} Ll. Alsedà, S. Kolyada, J. Llibre, L. Snoha, \emph{Entropy and periodic points for transitive maps}, Trans. Amer. Math. Soc. \textbf{351} (1999), 1551--1573.
\bibitem{ALM00} Ll. Alsedà, J. Llibre, M. Misiurewicz, \emph{Combinatorial Dynamics and Entropy in Dimension One}, 2nd ed., World Scientific, River Edge, NJ, 2000.
\bibitem{BBCDS92} J. Banks, J. Brooks, G. Cairns, G. Davis, P. Stacey, \emph{On Devaney's definition of chaos}, Amer. Math. Monthly \textbf{99} (1992), 332--334.
\bibitem{BGKM02} F. Blanchard, E. Glasner, S. Kolyada, A. Maass, \emph{On Li--Yorke pairs}, J. Reine Angew. Math. \textbf{547} (2002), 51--68.
\bibitem{BCOT24} J. Bobok, J. Činč, P. Oprocha, S. Troubetzkoy, \emph{Interval maps with dense periodicity}, preprint, arXiv:2402.05638, 2024.

\bibitem{BC87} L. Block, E. M. Coven, \emph{Topological conjugacy and transitivity for a class of piecewise monotone maps of the interval}, Trans. Amer. Math. Soc. \textbf{300} (1987), 297--306.
\bibitem{BC92} L. S. Block, W. A. Coppel, \emph{Dynamics in One Dimension}, Lecture Notes in Math. \textbf{1513}, Springer, Berlin, 1992.
\bibitem{BGMY80} L. Block, J. Guckenheimer, M. Misiurewicz, L.-S. Young, \emph{Periodic points and topological entropy of one dimensional maps}, Lecture Notes in Math. \textbf{819}, Springer, 1980, 18--34.
\bibitem{Blo82} A. M. Blokh, \emph{On sensitive mappings of the interval}, Russian Math. Surveys \textbf{37} (1982), 203--204.
\bibitem{Bow71} R. Bowen, \emph{Entropy for group endomorphisms and homogeneous spaces}, Trans. Amer. Math. Soc. \textbf{153} (1971), 401--414.
\bibitem{CR05} J. S. Cánovas, J. M. Rodríguez, \emph{Topological entropy of maps on the real line}, Topology Appl. \textbf{153} (2005), 735--746.
\bibitem{CV24} A. E. Calderón, E. Villar-Sepúlveda, \emph{Unifying topological entropy notions for piecewise continuous maps on the interval}, preprint, arXiv:2404.10867, 2024.

\bibitem{Dev89} R. L. Devaney, \emph{An Introduction to Chaotic Dynamical Systems}, 2nd ed., Addison-Wesley, 1989.
\bibitem{KW25} D. Kwietniak, F. Wierzbowski, \emph{On mixing and dense periodicity on spaces with a free arc}, preprint, arXiv:2510.15073, 2025.

\bibitem{LY75} T. Y. Li, J. A. Yorke, \emph{Period three implies chaos}, Amer. Math. Monthly \textbf{82} (1975), 985--992.
\bibitem{MS80} M. Misiurewicz, W. Szlenk, \emph{Entropy of piecewise monotone mappings}, Studia Math. \textbf{67} (1980), 45--63.
\bibitem{Rue17} S. Ruette, \emph{Chaos on the Interval}, University Lecture Series \textbf{67}, Amer. Math. Soc., Providence, RI, 2017.
\bibitem{Sha64} A. N. Sharkovskii, \emph{Co-existence of cycles of a continuous mapping of the line into itself}, Ukrain. Mat. Zh. \textbf{16} (1964), 61--71.
\bibitem{Sha64b} A. N. Sharkovsky, \emph{Nonwandering points and the centre of a continuous mapping of the line into itself}, Dopovidi Akad. Nauk Ukrain. RSR (1964), 865--868.
\bibitem{Smi86} J. Smítal, \emph{Chaotic functions with zero topological entropy}, Trans. Amer. Math. Soc. \textbf{297} (1986), 269--282.
\bibitem{Ste77} P. Štefan, \emph{A theorem of Šarkovskii on the existence of periodic orbits of continuous endomorphisms of the real line}, Comm. Math. Phys. \textbf{54} (1977), 237--248.
\bibitem{Wal82} P. Walters, \emph{An Introduction to Ergodic Theory}, Graduate Texts in Math. \textbf{79}, Springer, New York, 1982.
\end{thebibliography}
\end{document}